\DeclareMathOperator{\sgn}{sgn}
\newcommand{\PM}{\mathcal{PM}}
\newcommand{\XPM}{\mathcal{X}}
\newcommand{\Projector}{\mathbb{P}}
\newcommand{\RRR}{\mathbb{R}^{3}}
\newcommand{\vf}{{\varphi}}
\newcommand{\bbfR}{\mathbb{R}}
\newcommand{\be}{\begin{equation}} 
\newcommand{\ee}{\end{equation}}
\newcommand{\bea}{\begin{eqnarray}} 
\newcommand{\eea}{\end{eqnarray}}
\newcommand{\bean}{\begin{eqnarray*}} 
\newcommand{\eean}{\end{eqnarray*}}
\newcommand{\rf}[1]{(\ref {#1})}
\newcommand{\ud }{\,{\rm d}}
\DeclareMathOperator*{\esssup}{ess\,sup}
\def\dx{\,{\rm d}x}
\def\ve{\varepsilon}
\def\mn{|\!\!|}
\def\mn2{|\!\!|_{M^{d/2}}}
\def\X{{\mathcal X}}
\newtheorem{theorem}{Theorem}
\newtheorem{proposition}[theorem]{Proposition}
\newtheorem{lemma}[theorem]{Lemma}
\newtheorem{corollary}[theorem]{Corollary}
\theoremstyle{definition}
\theoremstyle{remark}
\newtheorem{remark}[theorem]{Remark}
\numberwithin{equation}{section}
\numberwithin{theorem}{section}
\author[M. Cannone]{Marco Cannone}
\address[M. Cannone]{
Laboratoire d'Analyse et de Mathématiques Appliquées,
Universit\'e Gustave Eiffel,
5 boulevard Descartes,
Bâtiment Copernic,
77420 Champs-sur-Marne, France.}
\email{marco.cannone@univ-eiffel.fr}
\author[G. Karch]{Grzegorz Karch}
\address[G. Karch]{
 Instytut Matematyczny, Uniwersytet Wroc\l awski,
 pl. Grunwaldzki 2/4, \hbox{50-384} Wroc\-\l aw, Poland}
\email{Grzegorz.Karch@math.uni.wroc.pl}
\author[D. Pilarczyk]{Dominika Pilarczyk}
\address[D. Pilarczyk]{ Wydzia{\l} Matematyki, Politechnika Wroc{\l}awska,
Wybrze\. ze Wyspia\'nskiego 37, Wroc{\l}aw, Poland} 
\email{dominika.pilarczyk@pwr.edu.pl}
\author[G. Wu]{Gang Wu}
\address[G. Wu]{
School of Mathematical Sciences, University of Chinese Academy of Sciences, Beijing 100049, 
People's Republic of China}
\email{wugang2011@ucas.ac.cn}
\thanks{The research of Gang Wu was partially supported by the National Natural Science
Foundation of China under grant No. 11771423.}
\title[Navier-Stokes equations with singular external force]{
Stability   of singular solutions \\
to the Navier-Stokes system}
\begin{document}
\begin{abstract}
We develop mathematical
methods 
which allow us to study asymptotic properties of  solutions 
to the three dimensional Navier-Stokes system for incompressible fluid in the whole three dimensional space.
We deal either with the Cauchy problem or with the stationary problem where solutions may be singular due to singular external forces which 
 are either singular finite measures or more general tempered distributions with bounded Fourier transforms. 
We present results on asymptotic properties of such solutions either for large values of the space variables (so called the far-field asymptotics) or for large values of time.
\end{abstract}

\keywords{Navier--Stokes equation;  Cauchy problem; stationary solutions; singular solutions;  asymptotic behavior of solutions.}

\subjclass[2010]{35A21;  35B40; 35C06; 35Q30; 76D05}

\date{\today}
\maketitle

\baselineskip=16pt

\section{Introduction}

\subsection*{Singular solutions to the Navier-Stokes system}

While a  proof of a regularity of  weak energy  solutions to an initial-boundary value problem for 
 the Navier-Stokes system 
 seems to be still out of reach, several advances were made  in the opposite direction of the study of singular (and sometimes exact) solutions for these equations.
In fact, 
as observed by Heywood \cite{H94}, ``\textit{it is easy to construct a singular solution of the Navier-Stokes equations that is driven by a singular force. One simply constructs a solenoidal vector field u that begins smoothly and evolves to develop a singularity, and
	then defines the force to be the residual.}'' Such singular solutions satisfy, usually in a distributional sense, the incompressible Navier-Stokes system 
\begin{equation} \label{1}
u_{t}+(u\cdot \nabla )u-\Delta u+\nabla p=f, \qquad \nabla\cdot u=0, 
\qquad x\in\RRR, \, t>0,
\end{equation}
where an external force $f=f(\cdot,t)$ is singular for some values of time $t$.
The purpose of this paper is to provide a systematic approach to the study of a large space and time behavior for such singular solutions. 
More precisely, we develop mathematical tools which allow us to study asymptotic properties of   solutions corresponding to external  forces which  are either singular finite measures or more general tempered distributions with bounded Fourier transforms. 
The following three examples
  of singular solutions fall within the scope of this work.

The Sl\"ezkin-Landau solutions  
$\left\{U^\beta(x),P^\beta(x)\right\}_{\beta \in \RRR}$  satisfy  the stationary Navier-Stokes system with the singular external force 
\begin{equation}\label{eqLandau}
    \begin{aligned}
        -\Delta U^{\beta}+\left(U^{\beta}\cdot \nabla \right)U^{\beta}+\nabla P^\beta &=\beta\delta_{0} , \qquad x\in \RRR \\
        \nabla\cdot U^{\beta}&=0,
    \end{aligned}
\end{equation}
where $\delta _0$ stands for the Dirac measure supported at the origin and $\beta\in\RRR$ is a constant vector. Such solutions  satisfy the system  in a weak sense and
in a particular case of $\beta = (\beta_1,0,0)$, they  are 
given by the following explicit formulas
\begin{equation}\label{L sol}
\begin{aligned}
   U^{\beta}_1(x)&=2\frac{c|x|^2-2x_1|x|+cx_1^2}{|x|(c|x|-x_1)^2},  &U^{\beta}_2(x)&=2\frac{x_2(cx_1-|x|)}{|x|(c|x|-x_1)^2},\\
   U^{\beta}_3(x)&=2\frac{x_3(cx_1-|x|)}{|x|(c|x|-x_1)^2},   &P^{\beta}(x)&=4\frac{cx_1-|x|}{|x|(c|x|-x_1)^2},
\end{aligned}
\end{equation}
where $|x|=\sqrt{x_1^2+x_2^2+x_3^2}$.
 The parameter  $c$ in expressions \eqref{L sol}
 is an arbitrary constant such that $|c|>1$ and is related to the parameter 
 $\beta_1 \in \bbfR$  by the formula 
\begin{equation}\label{bc}
 \beta_1=  \beta_1 (c)=\frac{8\pi c}{3(c^2-1)}\Big( 2+6c^2 -3c(c^2-1)\log\Big(\frac{c+1}{c-1}\Big) \Big).
\end{equation}
Notice that  the function $\beta_1=\beta_1 (c)$ is decreasing on $(-\infty , -1)$ and $(1, +\infty )$. Moreover, $\lim_{c\rightarrow  1}\beta_1 (c)=+\infty $, $\lim_{c\rightarrow  -1}\beta_1 (c)=-\infty $ and $\lim_{|c|\rightarrow  \infty }\beta_1 (c)=0$.

These explicit stationary solutions seem to be first calculated by 
Sl\"ezkin \cite{S34} (see Appendix in \cite{G09} for a translation of this paper),
rediscovered by 
Landau \cite{L44}, and reported and discussed in other   textbooks and papers,
see e.g.~\cite[p. 82]{LL59}, \cite[p. 206]{B99},  \cite{S51,CK04,TX98}.
Here, we would like to note that the construction of the Sl\"ezkin-Landau solutions  from the work \cite{TX98} inspired the authors of \cite{CK04} to write their paper.
Recently, \v{S}ver\'ak \cite{S11} proved  that the Sl\"ezkin-Landau solutions \rf{L sol} are the only stationary solutions which are invariant under the natural scaling of the equations, namely under the scaling 
\begin{equation}\label{scaling}
u(x) \rightarrow \lambda u(\lambda x)\quad \text{and} \quad p(x)\rightarrow \lambda^2 p(\lambda x)\quad  \text{for} \quad \lambda >0.
\end{equation}

A time dependent version $U^\gamma=U^\gamma(x,t)$  of the Sl\"ezkin-Landau solutions \eqref{L sol}
has been constructed in the paper \cite{KZ15} by considering 
  the Cauchy problem 
\begin{equation}\label{NSsing}
	\begin{aligned}
		U^\gamma_t - \Delta U^\gamma +(U^\gamma\cdot  \nabla)U^\gamma+\nabla P^\gamma &= \beta \delta_{\gamma},
		\qquad x\in\RRR, \, t>0,\\
		\nabla \cdot U^\gamma &=0,\\
 U^\gamma(0)&=0
\end{aligned}
\end{equation}
with constant $\beta\in \bbfR^3$, the shifted Dirac delta measure $\delta_{\gamma(t)} =\delta(\cdot -\gamma(t))$, and a H\"older continuous function $\gamma :[0,\infty)\to \RRR$. It was shown in \cite{KZ15} that
the  solutions  $U^\gamma=U^\gamma(x,t)$ and $P^\gamma=P^\gamma(x,t)$ to problem \rf{NSsing} are locally bounded away from the graph of the curve 
$\Gamma=\{(\gamma (t),t)\in \RRR\times [0,\infty)\,:\, t\geqslant 0\}$  and are singular along this curve, see Remark \ref{rem:q>3} below for more comments on these solutions. 
Moreover, it was proved in  the recent paper \cite{KSS20} that singular solutions of problem \rf{NSsing} have finite energy, namely,  
$U^\gamma \in C\big([0,\infty),L^2(\RRR)^3\big)$.

Another class of singular stationary solutions has been considered recently in a series of papers \cite{LLY_18_II,LLY_18_I,LLY_19}. Those solutions are invariant under the scaling \rf{scaling}, they are axisymmetric, and they belong to the space $C^\infty(\RRR\setminus \{(x_1, x_2) = 0\})$
with a possible singular ray $\{(x_1, x_2) = 0\}$. It was shown in the recent preprint \cite{LY19} that
such singular solutions  satisfy, for a certain range of parameters,  
the stationary Navier-Stokes system 
\begin{equation}\label{egLLY}
    \begin{aligned}
	-\Delta U+\left(U\cdot \nabla \right)U+\nabla P &=F , \qquad x\in \RRR ,\\
	\nabla\cdot U&=0,
\end{aligned}
\end{equation}
where $F$ is a tempered distribution defined by the formula
\begin{equation}\label{sin_force}
F\vf=	\left( 4\pi c\int_{-\infty}^{\infty} \log |x_3| \partial_{x_3}\vf (0,0,x_3)\ud x_3 - b\vf (0)\right)e_3
\end{equation}
for every $\vf \in \mathcal{S}(\RRR)$. Here, $c, b\in \bbfR$ are suitable constants and $e_3=(0,0,1)$. 
It is well-known that the Fourier transform of the Dirac measure $\delta_0 \varphi =\varphi(0)$ is give by the constant $(2\pi)^{-3/2}$
(with the Fourier transform defined as in \eqref{FTr}). It follows from  
Lemma~\ref{Lem:Tf} below 
 that the Fourier transform of the distribution $F$ defined in \eqref{sin_force} is represented by the bounded function
$$
\widehat F(\xi)=\left(4\pi c 2^{-\frac{3}{2}} \pi^{-\frac{1}{2}}i \sgn \xi_3
-b (2\pi)^{-\frac32}
\right) e_3.  
$$


\subsection*{Cauchy problem in scaling invariant spaces}
Before we present the results from this work,  
we recall related  results on the existence of global-in-time solutions to the Cauchy problem \eqref{1} for the Navier-Stokes system in $\bbfR^3$ in  scaling invariant spaces.
Here, a~Banach space $X$ is  called scaling invariant  if its norm satisfies
$\|v\|_{X}=\|\lambda v(\lambda \cdot)\|_{X}$ for all $v\in X$ and all $\lambda>0$.  
There is a rich literature about existence of global-in-time solutions with small initial data in scaling invariant  spaces such as homogeneous Sobolev space $\dot H^{1/2}(\bbfR^3)$, the Lebesgues space $L^3(\bbfR^3)$, the Marcinkiewicz space $L^{3,\infty}(\bbfR^3)$, and 
the Besov, Morrey,   weak-Morrey, Fourier–Besov, weak-Herz, Fourier–Herz, Besov–Morrey, 
$BMO^{-1}$, ...  spaces.
 We refer the reader to the review \cite{C04} and to the monograph \cite{LR16} 
 for a discussion  of those results and for  references.
 Most of those existence results are proved by using the Kato approach \cite{K84} that consists in applying  the Banach  fixed point argument in a suitable time-dependent space which norm is composed of two parts, where  the usual norm of the persistence space $L^\infty\big((0, \infty); X\big)$ 
 is supplemented  the  auxiliary time-weighted norm  $\sup_{t>0} t^\alpha  \|u(\cdot,t)\|_Y$ for   another Banach space  $Y$.
 
 \subsection*{Navier-Stokes system with external forces}
 
 In order to apply the Kato approach to construct small global-in-time solutions to the Cauchy problem for system \eqref{1},
 one should consider a force $f=f(x,t)$ from another Banach  space with a norm  invariant under the scaling  
 \begin{equation}\label{f:scal}
  f_\lambda (x, t) =\lambda^3 f(\lambda x, \lambda^2t).
  \end{equation} 
 Such results are contained, for example,  in the paper  \cite{CP99}, where $ f={\rm div} F$ 
 with $F\in L^s\big((0, \infty), L^q(\bbfR^3)\big)$
 for $2/s +3/q=2$ with $2/3 <q<\infty$. 
 More recently,  global-in-time solutions have been constructed to this problem in the case of external forces from the  Lorentz space $L^{s,\infty}\big((0, \infty), L^{q,\infty}(\bbfR^3)\big)$
 for $2/s +3/q=3$ with $1 <q<\infty$ (\cite{KS18}), 
 from time-weighted Besov spaces (\cite{KS18a}),
 and from the homogeneous Besov space with both negative and positive differential orders
 (\cite{KS20}).
 
 The Kato two-norm approach cannot be applied  to solutions which do not decay as $t\to\infty$, for example, in  the case of the Navier-Stokes system \eqref{1} with a time independent external force. Here, global-in-time solutions corresponding to small initial conditions and small external forces  should be constructed in a scaling invariant space 
 $L^\infty \big((0,\infty), X\big)$,
 where only one natural norm $\sup_{t>0} \|\cdot\|_X$
 is involved. 
  To the best of our knowledge, 
 the Marcinkiewicz space $L^{3,\infty}(\bbfR^3)$  (\cite{M97,Y00}), 
the space $ {\PM}^2$    (\cite{LJS97,CK04}),
  the Fourier-Besov space $F\dot B^{2-\frac3p}_{p,\infty}$ with $p >3$ (\cite{KY11}), 
the  weak-Herz spaces (\cite{T11}),
and the weak Morrey space  ${\mathcal M}_{p,\infty,3-p}$ with $p\in (2,3]$ (\cite{LR15})
 are the only spaces where such an one norm approach  is possible.
 In general, solutions obtained in this setting  are only time-weakly continuous at $ t =0$
 because of the lack of strong continuity at $t =0$ of the heat semigroup 
 $\{S(t)\}_{t\geq 0}=\{e^{t\Delta}\}_{t\geq 0}$ in $X$. 
 
In this work, we choose an approach from \cite{CK04} where small solutions of the Cauchy problem for system \eqref{1} have been constructed in the space $ {\PM}^2$. We are convinced, however, that our ideas presented in this work can be extended to solutions from above mentioned spaces, where one norm approach to construct global-in-time  solutions is possible.

  \subsection*{Stationary solutions}
  In this work, we deal also with stationary solutions satisfying the system
 $$ 
         (w\cdot \nabla )w-\Delta w+\nabla p = g ,\quad 
         \nabla\cdot w  =0, \quad 
         x\in \RRR,
$$ 
where the existence and stability of  solutions have been studied in several works either in  the
case of bounded domains or exterior domains, see {\it e.g.}~the monograph \cite{G11}. 
Here, we contribute to the theory on 
 the existence and the stability of solutions to this stationary system
considered in the whole space $\RRR$ and 
with a singular and sufficiently small (in a suitable sense) external force, see {\it e.g.}~\cite{S88, KY95, BBIS11,BS09,DI17,PP13} and references therein.

\subsection*{Summary of results from this work}

As starting point in this work, we consider 
a~solution $u\in  C_w\big((0, \infty); \PM^2\big)$
to the Cauchy problem for the three dimensional Navier-Stokes system \eqref{1} 
 with a possibly singular external force $f\in C_w\big((0, \infty); {\PM}^0\big) $, see below for definitions of these spaces. 
 Such  solutions are constructed 
in Propositions~\ref{thmNSExistence} 
in the case of the Cauchy problem
and in Proposition \ref{thmENSExistence} in the case of stationary solutions  
which we recall from  \cite{CK04}.
The use of these spaces allows us to simplify several calculations of this work and we can deal with external forces which are tempered distributions with bounded Fourier transforms. 
In Theorem \ref{corNSLandau}, we show that  every solution
from $C_w\big((0, \infty); {\PM}^2\big)$
 behaves, for large values of $|x|$ (which is measured by the Lebesgue norm $L^q(\bbfR^3)$ with $q\in [2,3)$)
 as a solution to the heat equation. 
In Theorem~\ref{lemENSRegular}, we consider stationary solutions and we show their stability in the norms of $L^q(\bbfR^3)$ with some $q<3$.
A general asymptotic stability result of solutions to the Cauchy problem for system 
\eqref{1} is presented in Theorem \ref{thm_self_similar}.
In  remarks,  we explain  how our theorems are related to previous results and, especially, we emphasize how they can be applied to singular solutions discussed above.


\section{Results and comments}\label{results}

We consider  mild solutions to  the Cauchy problem  
\begin{equation}\label{eqNSNS}
\begin{aligned}
        u_{t}+(u\cdot \nabla )u-\Delta u+\nabla p&=f,\qquad (x,t)\in \RRR\times\mathbb{R}^{+},\\
        \nabla\cdot u&=0,\\
        u(x,0)&=u_{0}(x).
    \end{aligned}
\end{equation}
Namely, applying the Duhamel principle, we deal with the corresponding integral equation
\begin{equation}\label{int_eq}
u(t) = S(t)u_0-B(u,u)(t)+F(t)
\end{equation}
with the heat semigroup 
\begin{equation}\label{heat_semigroup}
S(t) u_0(x)=
\int_{\bbfR^3}
\frac{1}{(4\pi t)^{3/2}}\textrm{exp}\left(-\frac{|x-y|^2}{4t}\right) u_0(y)\ud y,
\end{equation}
 the bilinear form
\begin{equation}\label{bili_form}
  B(u,v)(t) \equiv \int_{0}^{t}\Projector\nabla S(t-\tau)\cdot\big(u(\tau)\otimes v(\tau)\big)\ud\tau,
\end{equation}
and the term obtained from the external force
\begin{equation}\label{force}
 F(t)\equiv \int_{0}^{t}S(t-\tau)\Projector f(\tau)\ud\tau.
\end{equation}
The Leray projector $\Projector$ in formulas \eqref{bili_form} and \eqref{force}
is defined as the Fourier multiplier 
with the matrix component symbol $(\widehat{\Projector}(\xi))_{j,k}=\delta_{jk}-{\xi_{j}\xi_{k}}/{|\xi|^{2}}$ satisfying the obvious inequalities $|(\widehat{\Projector}(\xi))_{j,k}| \leqslant 1$ for all $\xi\in \bbfR^3\setminus \{0\}$ and all $j,k \in \{1,2,3\}$. Moreover, for two vector fields $u=(u_1,u_2,u_3)$ and $v=(v_1,v_2,v_3)$, we introduce  their tensor product 
$u\otimes v$ which is  the $3\times 3$ matrix whose $(\ell,k)$ entry is $u_\ell v_k$. Here and in what follows the Fourier transform of an integrable function $f$ is given by 
\begin{equation}
\label{FTr}
	\widehat{f}(\xi) = (2\pi)^{-\frac{3}{2}}\int_{\RRR}e^{-i x \cdot \xi}f(x) \ud x.
\end{equation}

The following functional spaces
have been introduced in the work \cite{CK04} and
are systematically used in this paper
\begin{equation}\label{PM_space}
	\PM^{a}\equiv \{u\in \mathcal{S}^{\prime}(\RRR): \widehat{u}\in L^{1}_{loc}(\RRR), \|u\|_{\PM^{a}}=\esssup_{\xi\in \RRR}|\xi|^{a}|\widehat{u}(\xi)|<\infty\},
\end{equation}
where $a\in \bbfR$ is a given parameter.
We also consider the counterparts of these spaces for time dependent tempered distributions 
\begin{equation*}
	\mathcal{X}^{a}\equiv C_{w}\big([0,\infty),\PM^{a}\big)
	\quad \text{with the norm} \quad \| u\|_{\mathcal{X}^{a}} =\esssup_{t\in [0,\infty)}\|u(\cdot )\|_{\PM^a}.
\end{equation*}
Here, the symbol $C_w$ denotes distributions $u(\cdot, t)$ weakly continuous in time
which means that the mappings $t\mapsto \langle u(t),\vf \rangle_{\mathcal{S}'(\RRR)}$ are continuous for each $\vf \in \mathcal{S}(\RRR)$ (the usual Schwartz class of test functions).

First, we recall a result on the existence of small solutions to the integral equation \eqref{int_eq} in the space $\PM^{2}$. 

\begin{proposition}[{\cite[Thm.~4.1]{CK04}}]\label{thmNSExistence}
There is $\ve >0$ such that if $\|u_{0}\|_{\PM^{2}}<\ve$ and $\|f\|_{\mathcal{X}^0}<\ve$ then there exists a global-in-time mild solution $u\in \XPM^{2}$ to the Cauchy problem for the Navier-Stokes system \eqref{eqNSNS}.  Moreover, there exists a constant $C>0$ such that this solution satisfies
$\|u\|_{\mathcal{X}^2}\leqslant C\big(\|u_0\|_{\PM^2}+\|f\|_{\mathcal{X}^0}\big)$.
\end{proposition}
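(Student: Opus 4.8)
The plan is to implement the Kato fixed-point scheme directly in the space $\XPM^2$, exploiting the fact that the $\PM^a$ norms behave well under the Fourier transform, so that all operators reduce to pointwise multiplication estimates on $\widehat u$. First I would show that the linear terms land in $\XPM^2$: for the heat part, since $|\widehat{S(t)u_0}(\xi)| = e^{-t|\xi|^2}|\widehat u_0(\xi)| \leq |\widehat u_0(\xi)|$, we get $\|S(t)u_0\|_{\PM^2}\leq \|u_0\|_{\PM^2}$ uniformly in $t$, and weak continuity in $t$ is routine; for the forcing term, one estimates
\begin{equation*}
|\xi|^2 |\widehat{F(t)}(\xi)| \leq |\xi|^2 \int_0^t e^{-(t-\tau)|\xi|^2} |\widehat{\Projector f}(\tau)(\xi)|\ud\tau \leq \|f\|_{\XPM^0}\, |\xi|^2\int_0^t e^{-(t-\tau)|\xi|^2}\ud\tau,
\end{equation*}
and the last integral is bounded by $1$ uniformly in $\xi$ and $t$, using $|\widehat\Projector(\xi)|\leq 1$. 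Hence $\|F\|_{\XPM^2}\leq \|f\|_{\XPM^0}$.

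The core of the argument is the bilinear estimate $\|B(u,v)\|_{\XPM^2}\leq C\|u\|_{\XPM^2}\|v\|_{\XPM^2}$. Here I would take the Fourier transform of \eqref{bili_form}, so that $\widehat{B(u,v)}(\xi,t)$ is a convolution integral: up to the Leray projector and the symbol $i\xi$ of the gradient, one has $\widehat{u(\tau)\otimes v(\tau)}(\xi) = (2\pi)^{-3/2}(\widehat u(\tau)*\widehat v(\tau))(\xi)$. Bounding $|\widehat u(\tau)(\eta)|\leq \|u\|_{\XPM^2}|\eta|^{-2}$ and $|\widehat v(\tau)(\xi-\eta)|\leq \|v\|_{\XPM^2}|\xi-\eta|^{-2}$, using $|\widehat\Projector|\leq 1$ and the heat factor $|\xi|e^{-(t-\tau)|\xi|^2}$, and integrating in $\tau$ (which contributes a factor $c|\xi|^{-1}$), one reduces everything to checking that
\begin{equation*}
|\xi|^2 \cdot |\xi|^{-1} \cdot |\xi| \int_{\RRR} \frac{1}{|\eta|^2 |\xi-\eta|^2}\ud\eta \leq C,
\end{equation*}
and the convolution integral $\int_{\RRR} |\eta|^{-2}|\xi-\eta|^{-2}\ud\eta$ scales like $|\xi|^{-1}$ in $\bbfR^3$, giving exactly the needed homogeneity; the integral converges since $2+2 = 4 < 3 + 3$ at the two singularities and $4 > 3$ at infinity. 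This is the step I expect to require the most care — one must keep track of the time integration in $\tau$ carefully (the factor $|\xi|e^{-(t-\tau)|\xi|^2}$ is integrable in $\tau$ uniformly, contributing $\sim |\xi|^{-1}$), verify the convolution integral converges and compute its homogeneity, and check the weak continuity in $t$ of $B(u,v)$, which follows from dominated convergence once the uniform bound is in hand.

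Finally, with the linear bounds and the bilinear bound in place, I would close the argument by the standard contraction-mapping lemma (the abstract Picard lemma: if $\|L\|\leq \eta$ and $B$ is bilinear bounded with constant $C_B$ on a Banach space, then for $\|y_0\|$ small enough the equation $y = y_0 - B(y,y)$ has a unique small solution with $\|y\|\leq 2\|y_0\|$). Applying this with $y_0 = S(t)u_0 + F(t)$, whose norm is at most $\|u_0\|_{\PM^2} + \|f\|_{\XPM^0} < 2\ve$, one obtains a solution $u\in\XPM^2$ with $\|u\|_{\XPM^2}\leq C(\|u_0\|_{\PM^2}+\|f\|_{\XPM^0})$ provided $\ve$ is chosen small relative to $1/C_B$. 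The main obstacle is genuinely just the bilinear estimate and its time integration; the rest is bookkeeping. Since the statement is quoted verbatim from \cite[Thm.~4.1]{CK04}, I would in practice simply cite that reference, but the sketch above is how the proof runs.
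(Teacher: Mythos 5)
Your argument is correct and follows essentially the same route as the paper: pointwise Fourier-side bounds for the heat semigroup and the forcing term, the bilinear estimate reduced to the convolution identity $\int_{\RRR}|\eta|^{-2}|\xi-\eta|^{-2}\ud\eta=C|\xi|^{-1}$ (the paper's Lemmas \ref{lemm_heat_PM}, \ref{lemma_est_F} and \ref{est_of_bil_forms}, the latter via equation \eqref{int:C(b)} with $b=2$), closed by the abstract quadratic fixed-point Lemma \ref{lem:xyB}. Only fix the displayed homogeneity check, which carries a stray factor of $|\xi|$: since the gradient symbol together with the $\tau$-integration of the heat kernel contributes $|\xi|\cdot|\xi|^{-2}=|\xi|^{-1}$, the correct product is $|\xi|^{2}\cdot|\xi|^{-1}\cdot C|\xi|^{-1}=C$, exactly as your surrounding prose states.
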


This proposition  is obtained from the Banach contracted principle 
applied to the ``quadratic'' equation \eqref{int_eq}. We recall that reasoning in the beginning of Section \ref{proofs}.

\begin{remark}
Let us  comment the functional spaces used in Proposition \ref{thmNSExistence}. We consider external forces $f=f(\cdot,t) $
which are time 
 dependent tempered distributions 
 satisfying $\esssup_{t>0,\xi\in\bbfR^3} |\widehat f(\xi, t)|<\infty$
 in order to obtain  solutions in the space $ \mathcal{C}_{w}\big([0,\infty),\PM^{2}\big)$.
  However, 
due to   the embedding  $\PM^2\subset L^{3,\infty}(\bbfR^3)$
 (see Lemma \ref{KPS-lem} below)
 as well as the 
 the 
 embedding  
 into the homogeneous Besov space
$\PM^2\subset \dot B^{-1+3/p,\infty}_p(\bbfR^3)$ with $p>3$. 
 (see  \cite[Lemma 7.1]{CK04}),
  solutions from Proposition \ref{thmNSExistence}
agree with mild solutions of problem \eqref{eqNSNS} constructed in other scaling invariant spaces and discussed in several works, see the review \cite{C04} 
and the monograph \cite{LR16}
 for other references.
\end{remark}

Our first result of this work states  that a behavior of mild solutions 
to problem \eqref{eqNSNS}
obtained in Proposition~\ref{thmNSExistence}
 is  determined in the scale of $L^p$-spaces 
 by the quantity $S(t)u_0+F(t)$.
Notice that we do not impose any smallness assumption  in Theorem \ref{corNSLandau} below neither on initial data nor external forces.

\begin{theorem}\label{corNSLandau}
Every mild solution $u\in \mathcal{X}^2$ of problem \rf{eqNSNS} corresponding to an initial datum 
$u_0\in \PM^{2}$ and an external force $f \in \mathcal{X}^0$ satisfies
$
 u(t)- S(t)u_0-F(t) \in L^{q}(\RRR)
 $
 for each  $q\in \left[2,3\right)$ and all  $t>0$.
Moreover, for every $q\in \left[2,3\right)$  there exists a constant $C=C(q)>0$ such that 
\begin{equation}\label{uSF:Lq}
\| u(t)-S(t)u_0-F(t)\|_q \leqslant Ct^{\frac{3-q}{2q}}
 \|u\|^2_{\mathcal{X}^2}
 \qquad \text{for all} \quad t>0.
\end{equation}
\end{theorem}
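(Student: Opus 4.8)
The plan is to observe that, by the integral equation \eqref{int_eq}, one has $u(t)-S(t)u_0-F(t)=-B(u,u)(t)$, so the whole statement reduces to an $L^q$ estimate for the bilinear term $B(u,u)(t)$ from \eqref{bili_form}. Since $q\in[2,3)$, the conjugate exponent $q'=q/(q-1)$ lies in $(3/2,2]$, so it is enough to bound $\widehat{B(u,u)}(\cdot,t)$ in $L^{q'}(\RRR)$ with the right power of $t$ and then invoke the Hausdorff--Young inequality $\|h\|_q\le C_q\|\widehat h\|_{q'}$.

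Next I would take the Fourier transform inside the $\tau$-integral in \eqref{bili_form}, using the symbols of $S(t-\tau)$, of $\nabla$ and of $\Projector$ (recall $|(\widehat\Projector(\xi))_{j,k}|\le 1$), together with the componentwise bound $|\widehat{u\otimes u}(\xi,\tau)|\le C\big(|\widehat u(\cdot,\tau)|*|\widehat u(\cdot,\tau)|\big)(\xi)$, to get
\[
|\widehat{B(u,u)}(\xi,t)|\le C\int_0^t|\xi|\,e^{-(t-\tau)|\xi|^2}\big(|\widehat u(\cdot,\tau)|*|\widehat u(\cdot,\tau)|\big)(\xi)\dta .
\]
Then, inserting $|\widehat u(\eta,\tau)|\le\|u\|_{\X^2}|\eta|^{-2}$, the standard convolution identity $\int_{\RRR}|\eta|^{-2}|\xi-\eta|^{-2}\dy=C|\xi|^{-1}$ in $\RRR$ (finite since $0<2<3<2+2$ --- this is precisely the estimate behind the boundedness of $B$ on $\PM^2$ recalled in Proposition~\ref{thmNSExistence}), and the elementary identity $\int_0^t e^{-(t-\tau)|\xi|^2}\dta=(1-e^{-t|\xi|^2})/|\xi|^2$, I arrive at the pointwise bound
\[
|\widehat{B(u,u)}(\xi,t)|\le C\,\|u\|_{\X^2}^2\, g(\xi,t),\qquad g(\xi,t):=\frac{1-e^{-t|\xi|^2}}{|\xi|^2}.
\]

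It then remains to compute $\|g(\cdot,t)\|_{q'}$. The function $g(\cdot,t)$ is bounded near $\xi=0$ (it tends to $t$) and decays like $|\xi|^{-2}$ as $|\xi|\to\infty$, so $g(\cdot,t)\in L^{q'}(\RRR)$ exactly because $2q'>3$, i.e.\ because $q<3$; this is where the upper restriction on $q$ is used. The sharp $t$-power comes out of the parabolic scaling $\xi=t^{-1/2}\zeta$, under which $g(t^{-1/2}\zeta,t)=t\,g(\zeta,1)$, whence $\|g(\cdot,t)\|_{q'}=C(q)\,t^{\,1-3/(2q')}$. Feeding this into Hausdorff--Young and using $1-3/(2q')=(3-q)/(2q)$ for $q'=q/(q-1)$ yields $\|B(u,u)(t)\|_q\le C(q)\,\|u\|_{\X^2}^2\, t^{(3-q)/(2q)}$, which is \eqref{uSF:Lq}; in particular $B(u,u)(t)\in L^q(\RRR)$.

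I do not expect a genuine obstacle: the argument is a computation on the Fourier side. The only points requiring care are (i) the componentwise convolution bound for $\widehat{u\otimes u}$ and the interchange of the Fourier transform with the $\tau$-integral, which are exactly what underlies the boundedness of $B:\PM^2\times\PM^2\to\PM^2$; (ii) keeping the $t$-powers uniform, which the scaling computation handles; and (iii) the two endpoint constraints on $q$ --- $q<3$ so that $g(\cdot,t)\in L^{q'}$, and $q\ge 2$ so that $q'\le 2$ and Hausdorff--Young applies in the needed direction (the constant $C(q)$ blows up as $q\to 3^-$ because $\|g(\cdot,1)\|_{q'}\to\infty$ as $q'\to 3/2^+$).
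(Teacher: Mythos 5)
Your proposal is correct and follows essentially the same route as the paper: both reduce the statement to $u(t)-S(t)u_0-F(t)=-B(u,u)(t)$, bound $|\widehat{B(u,u)}(\xi,t)|$ by $C\|u\|_{\X^2}^2\,(1-e^{-t|\xi|^2})/|\xi|^2$ using the convolution identity \eqref{int:C(b)} with $b=2$, and conclude by Hausdorff--Young with $q\geqslant 2$. The only cosmetic difference is that you compute $\|(1-e^{-t|\cdot|^2})/|\cdot|^2\|_{q'}$ directly by parabolic scaling, whereas the paper packages the same frequency-splitting computation as the $\PM^0$--$\PM^2$ interpolation inequality of Lemma~\ref{lemEmbedding} applied to the two endpoint bounds of Lemma~\ref{lemma_est_B_PMb}.
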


\begin{remark}
In fact, in the proof of  Theorem \ref{corNSLandau}, we show the following more general estimate  valid  for each $b\in \left[0,2\right]$:
\begin{equation}\label{uSF:PM}
\|u(t) - S(t)u_0-F(t)\|_{\PM^b}  \leqslant 
Ct^{\frac{2-b}{2}}\|u\|_{\mathcal{X}^2}^2
\quad \text{for all} \quad t>0.
\end{equation}
Then, the $L^q$-estimate \eqref{uSF:Lq} is obtained from inequality \eqref{uSF:PM} with $b\in \{0,2\}$ combined with the 
 interpolation inequality  from Lemma \ref{lemEmbedding} which requires $q\geq 2$. 
On the other hand,  inequality \eqref{uSF:PM} with $b\in [0,1)$ may be considered as a counterpart of estimate \eqref{uSF:Lq} with $q\in [1,2)$.
\end{remark}

\begin{remark}We prove below in Corollary \ref{cor:F:Lq} that, for each  $f\in \X^0$, we have 
$F(t)\in L^q(\RRR)$ for all $q\in [2,3)$ and all $t>0$.
On the other hand, in general, $S(t)u_0\notin L^q(\RRR)$ for $q\in [2,3)$
if $u_0\in \PM^2$ is homogeneous of degree $-1$,  see {\it e.g.}~\cite{BV07}.
Thus, Theorem \ref{corNSLandau} states that a behavior of solutions of problem \rf{eqNSNS} with a singular initial datum from $\PM^2$ and with a singular external force from $\X^0$ is determined 
 for large values of $|x|$
only by $S(t)u_0$ at the first approximation, see {\it e.g.}~\cite{BM02,B09, BV07} for analogous results in the case of $f=0$. 
The external force $f=f(\cdot,t)$ will appear in a higher order far field asymptotic expansion of solutions which was already observed in the work \cite{BB09,BBJ09} in the case of more regular initial conditions and external forces.
 \end{remark}

\begin{remark}
Recall  that $S(t)u_0\in C\big([0,\infty), L^2(\RRR)^3\big)$ for each $u_0 \in  L^2(\RRR)^3$. Thus, 
Theorem \ref{corNSLandau} implies that
in the case of arbitrary $u_0\in L^2(\RRR)^3\cap \PM^2$ and possibly singular $f\in\X^0$,
 the corresponding solution has a finite energy, namely,   $u(t)\in L^2(\RRR)^3$ for each $t\geqslant 0$. 
Such a finite energy result was recently obtained  in \cite{KSS20} in the particular case of 
singular solutions to problem  \eqref{NSsing}.
\end{remark}

\begin{remark}\label{rem:q>3}
By methods from this work,
it is also possible to study asymptotic properties of solutions 
of problem \rf{eqNSNS} in the $L^q$-spaces with 
 $q>3$. Here, however, we obtain an asymptotic profile of a solution around its singular points as, for example, 
in the work \cite{KZ15} where singularities of solutions to problem \eqref{NSsing} have been studied.
Assuming $\gamma$ to be H\"older continuous with exponent $\alpha\in(\frac{1}{2},1]$, the paper \cite{KZ15}  compares the solution $(U^\gamma,P^\gamma)$ of problem  \eqref{NSsing} to the Sl\"{e}zkin-Landau solutions made time dependent by translating the origin to $\gamma(t)$ proving that for all $t>0$,
$$U^\gamma(t) -U^c(\cdot-\gamma(t)) \in L^q(\RRR)\qquad  \text{for}\quad  3<q<\frac {3}{2(1-\alpha)}
$$
and
$$
P^\gamma(t)-P^c(\cdot-\gamma(t))\in L^q(\RRR) \qquad \text{for}\quad  \frac {3}{2}<q<\frac {3}{3-2\alpha}.
$$
An analogous result describing  
singularities of very weak stationary solutions of the Navier-Stokes system
was obtained in \cite{MT12}.  
We  study this case  in Theorem \ref{lemENSRegular} below devoted to the stationary Navier-Stokes system.
\end{remark}

Next, we present 
a counterpart of Theorem \ref{corNSLandau} in the case of 
 a solution $w=w(x)$ to the stationary Navier-Stokes system
\begin{equation}\label{eqENSNS}
         (w\cdot \nabla )w-\Delta w+\nabla p = g ,\quad 
         \nabla\cdot w  =0, \quad 
         x\in \RRR
\end{equation}
with a given external force $g=g(x)$.
In Section \ref{estimates},
we define (via the Fourier transform) the bilinear operator 
\begin{equation}\label{bili_form_E}
  B_{E}( w, v) \equiv \int_{-\infty}^{t}\Projector\nabla S(t-\tau)\cdot\big( w\otimes v\big)\ud  \tau
\end{equation}
and 
\begin{equation}\label{F_E}
G \equiv \int_{-\infty}^t S(t-\tau )\Projector g \ud  \tau
\end{equation}
which allow us to formulate 
system \rf{eqENSNS}  (see e.g. \cite[Thm.~1.1]{Y00}, \cite[Prop.~6.1]{CK04})
 in the formally equivalent form 
\begin{equation}\label{int_E}
 w = -B_E(w,w) +G. 
\end{equation}
As in the case of the Cauchy problem \rf{eqNSNS}, we study properties of solutions to integral equation \rf{int_E} which we also call as mild solutions to system \eqref{eqENSNS}.
The following result recalled from \cite{CK04} gives an existence of mild stationary  solutions in the space $\PM^2$. 

\begin{proposition}[{\cite[Thm.~6.1]{CK04}}]\label{thmENSExistence}
There exists $\ve >0$ such that, for every $g\in \PM^{0}$, if $\|g\|_{\PM^{0}}<\ve $ then system \rf{eqENSNS} has a solution $w\in \PM^{2}$.
Moreover, there exists a constant $C>0$ such that this solution satisfies the inequality 
$\|w\|_{\PM^2}\leqslant C\|g\|_{\PM^0}$. 
\end{proposition}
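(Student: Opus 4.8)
\medskip

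The plan is to deduce Proposition~\ref{thmENSExistence} from the same abstract fixed point lemma that underlies Proposition~\ref{thmNSExistence} and is recalled at the beginning of Section~\ref{proofs}: if $X$ is a Banach space, $\mathcal{B}\colon X\times X\to X$ is a bounded bilinear map with $\|\mathcal{B}(u,v)\|_X\le\kappa\|u\|_X\|v\|_X$, and $y\in X$ satisfies $4\kappa\|y\|_X<1$, then $w=y-\mathcal{B}(w,w)$ has a solution $w\in X$ with $\|w\|_X\le 2\|y\|_X$. I would apply this with $X=\PM^{2}$, $y=G$, and $\mathcal{B}=B_E$ as defined in \eqref{bili_form_E}--\eqref{F_E}, so the whole proof reduces to two estimates carried out on the Fourier side.

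First I would estimate the forcing term $G$. Taking the Fourier transform in \eqref{F_E} and using $\widehat{S(s)v}(\xi)=e^{-s|\xi|^{2}}\widehat v(\xi)$ together with $|(\widehat\Projector(\xi))_{j,k}|\le 1$, one finds for a.e.\ $\xi\ne 0$
\[
\widehat G(\xi)=\Big(\int_{-\infty}^{t}e^{-(t-\tau)|\xi|^{2}}\ud\tau\Big)\,\widehat\Projector(\xi)\,\widehat g(\xi)=\frac{1}{|\xi|^{2}}\,\widehat\Projector(\xi)\,\widehat g(\xi),
\]
where the integral in $\tau$ converges exactly because $|\xi|^{2}>0$; in particular $G$ does not depend on $t$, as a stationary solution must be. Hence $|\xi|^{2}|\widehat G(\xi)|\le|\widehat g(\xi)|$ and $\|G\|_{\PM^{2}}\le\|g\|_{\PM^{0}}$.

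Next I would treat the bilinear term, which is the only step requiring genuine work. In the same way, $\widehat{B_E(w,v)}(\xi)=|\xi|^{-2}\,\widehat\Projector(\xi)\,(i\xi)\cdot\widehat{w\otimes v}(\xi)$, so that $|\xi|^{2}|\widehat{B_E(w,v)}(\xi)|\le C\,|\xi|\,|\widehat{w\otimes v}(\xi)|$ after summing the finitely many components and using $|\widehat\Projector|\le 1$. Writing $\widehat{w\otimes v}=(2\pi)^{-3/2}\widehat w*\widehat v$ entrywise and using $|\widehat w(\eta)|\le\|w\|_{\PM^{2}}|\eta|^{-2}$, $|\widehat v(\eta)|\le\|v\|_{\PM^{2}}|\eta|^{-2}$, I obtain
\[
|\xi|^{2}|\widehat{B_E(w,v)}(\xi)|\le C\,\|w\|_{\PM^{2}}\|v\|_{\PM^{2}}\;|\xi|\int_{\RRR}\frac{\ud\eta}{|\xi-\eta|^{2}\,|\eta|^{2}}.
\]
The integral is finite, since each factor is locally integrable in $\RRR$ ($2<3$) and the total homogeneity $-4$ is integrable at infinity in dimension three, and by the substitution $\eta\mapsto|\xi|\eta$ together with rotational invariance it equals $C_0|\xi|^{-1}$. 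This yields $\|B_E(w,v)\|_{\PM^{2}}\le\kappa\|w\|_{\PM^{2}}\|v\|_{\PM^{2}}$ for an absolute constant $\kappa>0$; this convolution/scaling computation is exactly where the exponent $2$ in $\PM^{2}$ is used, and it is the main point of the proof.

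Combining the two bounds, the abstract lemma applies as soon as $4\kappa\|g\|_{\PM^{0}}<1$, so it suffices to take $\ve=(4\kappa)^{-1}$; the resulting solution $w\in\PM^{2}$ of \eqref{int_E} then satisfies $\|w\|_{\PM^{2}}\le 2\|G\|_{\PM^{2}}\le 2\|g\|_{\PM^{0}}$, which is the claimed inequality with $C=2$. It only remains to note that a solution of the integral equation \eqref{int_E} is a distributional solution of the stationary system \eqref{eqENSNS}; this follows by applying $-\Delta$ and the divergence and using $\nabla\cdot\Projector=0$, as in the references quoted after \eqref{int_E}. I do not expect any real obstacle here: the argument is a direct transcription of the Fourier-space scaling computations behind Proposition~\ref{thmNSExistence}, with $\int_{0}^{t}$ replaced by $\int_{-\infty}^{t}$.
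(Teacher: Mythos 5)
Your proposal is correct and follows exactly the route the paper takes: the paper's proof of Proposition~\ref{thmENSExistence} simply invokes the quadratic fixed point lemma (Lemma~\ref{lem:xyB}) with $y=G$ and the bilinear estimate \eqref{est_BE_form}, whose proof in turn rests on the same convolution identity $\int_{\RRR}|\eta|^{-2}|\xi-\eta|^{-2}\ud\eta=C|\xi|^{-1}$ (equation \eqref{int:C(b)} with $b=2$) that you derive by scaling. The bound $\|G\|_{\PM^{2}}\leqslant\|g\|_{\PM^{0}}$ via \eqref{F_trans_G} and the final choice $\ve=(4\kappa)^{-1}$, $C=2$ likewise match the paper's argument.
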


\begin{remark}
Let us quote other related works where small solutions to the stationary Navier-Stokes system \eqref{eqENSNS} have been constructed for sufficiently small and possibly rough external forces.
In \cite{KY95},  an external force belongs to   
$\mathcal{M}^{-2}_{3,r}$, 
$r\in (2,3]$, which is  the Sobolev-type space based on the Morrey space. 
In \cite{BBIS11},  $\Delta^{-1}\Projector g\in L^{3,\infty}(\RRR)$.
In the work \cite{PP13},  $\Delta^{-1}g \in \mathcal{V}^{1,2}$ --  a suitable capacity space which can be embedded into the homogeneous Morrey space $\mathcal{M}^{2,2}$. 
The authors of   \cite{DI17} deal with  $g = \phi g_0+g_1$, where $g_0$ is homogeneous of degree $-3$, $g_1$ is bounded and decays sufficiently fast at infinity and $\phi$ is a smooth cut-off function such that $\phi(x)=0$ if $|x|\leqslant 1/2$. 
\end{remark}

Our next goal is  show a certain kind of stability of stationary solutions 
from Proposition~\ref{thmENSExistence}
in the class of stationary solutions.

\begin{theorem}\label{lemENSRegular}
Let $\varepsilon >0$ be as in Proposition \ref{thmENSExistence}
and consider solutions $w_{i}\in\PM^2$ with $i\in \{1,2\}$ to stationary system 
\rf{eqENSNS} corresponding to external forces $g_{i}\in \PM^0$ satisfying $\|g_i\|_{\PM^0}<\varepsilon$. 
Assume that $g_1-g_2\in \PM^{b-2}$
for some  $b\in \left(1,3\right)$.
There exists $\tilde\varepsilon \in (0,\varepsilon]$ such that if
 $\|g_i\|_{\PM^0}\leqslant \tilde\varepsilon$
then $w_1-w_2\in \PM^b$.
Moreover, 
\begin{enumerate}
\item
if $b\in \left(1,2\right)$, 
 then   
 $w_1 -w_2\in  L^{q}(\RRR)$
  for each
    $q\in \left(\tfrac{3}{3-b},3\right)$
  such that  $q\geqslant 2$;
  

 \item
  if $b\in (2,3)$, then $w_1 -w_2\in  L^{q}(\RRR)$
  for each
  $q\in \left(3,\tfrac{3}{3-b}\right)$.

\end{enumerate}
In both case, the following inequality holds true
\begin{equation}\label{w1w2:3}
  	\| w_1 - w_2\|_q\leqslant
  	C\| g_1-g_2\|_{\PM^0}^{1-\frac{q-3}{q(b-2)}} \| g_1-g_2\|_{\PM^{b-2}}^{\frac{q-3}{q(b-2)}}
  \end{equation}
   for a constant $C>0$.
\end{theorem}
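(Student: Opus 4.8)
The plan is to derive a \emph{linear} fixed-point equation for the difference $w\equiv w_1-w_2$ and to solve it simultaneously in $\PM^2$ and in $\PM^b$.

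\textbf{Step 1 (equation for the difference).} Subtracting the two copies of \eqref{int_E} and using bilinearity of $B_E$, one gets
\begin{equation*}
w=-B_E(w_1,w)-B_E(w,w_2)+(G_1-G_2),
\end{equation*}
where $G_i$ is given by \eqref{F_E} with $g=g_i$. On the Fourier side $\widehat{G_1-G_2}(\xi)=|\xi|^{-2}\,\widehat{\Projector}(\xi)\,\widehat{(g_1-g_2)}(\xi)$, so the bound $|(\widehat{\Projector}(\xi))_{j,k}|\le1$ gives at once
$$\|G_1-G_2\|_{\PM^2}\le\|g_1-g_2\|_{\PM^0},\qquad\|G_1-G_2\|_{\PM^b}\le\|g_1-g_2\|_{\PM^{b-2}}.$$
Since $g_1-g_2\in\PM^0$, the datum $h\equiv G_1-G_2$ lies in $\PM^2\cap\PM^b$.

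\textbf{Step 2 (bilinear estimate and contraction).} The Fourier symbol of $B_E$ is of order $|\xi|^{-1}$, so estimating the convolution on the Fourier side by $\int_{\RRR}|\eta|^{-a_1}|\xi-\eta|^{-a_2}\ud\eta\le C|\xi|^{3-a_1-a_2}$ (valid for $a_1,a_2\in(0,3)$, $a_1+a_2>3$) yields $\|B_E(u,v)\|_{\PM^{a_1+a_2-2}}\le C\|u\|_{\PM^{a_1}}\|v\|_{\PM^{a_2}}$. Choosing $(a_1,a_2)$ equal to $(2,2)$ and to $(2,b)$ (and $(b,2)$) — admissible precisely because $b\in(1,3)$ forces $2+b>3$ and $b<3$ — gives $\|B_E(u,v)\|_{\PM^2}\le C\|u\|_{\PM^2}\|v\|_{\PM^2}$ and $\|B_E(u,v)\|_{\PM^b}\le C(b)\|u\|_{\PM^2}\|v\|_{\PM^b}$. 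Hence the linear map $\mathcal L(v)\equiv-B_E(w_1,v)-B_E(v,w_2)$ obeys
$$\|\mathcal L(v)\|_{\PM^2}+\|\mathcal L(v)\|_{\PM^b}\le C(b)\big(\|w_1\|_{\PM^2}+\|w_2\|_{\PM^2}\big)\big(\|v\|_{\PM^2}+\|v\|_{\PM^b}\big),$$
and, since $\|w_i\|_{\PM^2}\le C\|g_i\|_{\PM^0}\le C\tilde\varepsilon$ by Proposition \ref{thmENSExistence}, I would fix $\tilde\varepsilon=\tilde\varepsilon(b)\in(0,\varepsilon]$ small enough that $\mathcal L$ is a contraction both on $\PM^2\cap\PM^b$ and on $\PM^2$. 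Then $v=\mathcal L(v)+h$ has a unique solution $w^\ast\in\PM^2\cap\PM^b$ with $\|w^\ast\|_{\PM^b}\le C\|g_1-g_2\|_{\PM^{b-2}}$ and $\|w^\ast\|_{\PM^2}\le C\|g_1-g_2\|_{\PM^0}$. Because $w_1-w_2$ solves the same linear equation and lies in $\PM^2$, uniqueness in $\PM^2$ forces $w_1-w_2=w^\ast$, so $w_1-w_2\in\PM^b$ with these bounds.

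\textbf{Step 3 (passage to $L^q$).} Finally I would apply the interpolation inequality of Lemma \ref{lemEmbedding} to $v=w_1-w_2\in\PM^2\cap\PM^b$, which for admissible $q\ge2$ bounds $\|v\|_q$ by $C\|v\|_{\PM^{\min\{2,b\}}}^{\theta}\|v\|_{\PM^{\max\{2,b\}}}^{1-\theta}$ with a fixed exponent $\theta=\theta(q,b)$; inserting the two norm bounds from Step 2 and simplifying the exponents produces exactly \eqref{w1w2:3}, the admissible range coming out as $q\in(\tfrac{3}{3-b},3)$ with $q\ge2$ when $b\in(1,2)$, and $q\in(3,\tfrac{3}{3-b})$ (where $q>3\ge2$ automatically) when $b\in(2,3)$. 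The main obstacle I anticipate is exactly this identification: $w_1-w_2$ is a priori only known to belong to $\PM^2$, and $\PM^2$ and $\PM^b$ have no inclusion relation, so one must run the contraction for the datum $h\in\PM^2\cap\PM^b$ in the intersection space and only afterwards match its solution with $w_1-w_2$ via $\PM^2$-uniqueness. The hypotheses $b>1$ (needed for $B_E\colon\PM^2\times\PM^b\to\PM^b$) and $b<3$ (needed for integrability of the convolution kernel near its diagonal) are forced by precisely this bilinear estimate, and they are also the constraints under which the interpolation window for $q$ is nonempty.
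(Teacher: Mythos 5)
Your proposal follows the same route as the paper's proof: linearize for the difference $W=w_1-w_2$, estimate the two bilinear terms $B_E(w_1,W)$ and $B_E(W,w_2)$ in $\PM^b$ via the convolution identity \eqref{int:C(b)} (this is exactly Lemma \ref{lemma_est_BE_form}, and your constraint $b\in(1,3)$ for the kernel integral is the same one the paper imposes), absorb them by smallness of $\|w_i\|_{\PM^2}$, and interpolate. Two remarks. First, your extra care in Step 2 is not wasted: the paper argues by writing $\|W\|_{\PM^b}\leqslant C\varepsilon\|W\|_{\PM^b}+2\|g_1-g_2\|_{\PM^{b-2}}$ and absorbing, which tacitly presupposes $\|W\|_{\PM^b}<\infty$; since $\PM^2$ and $\PM^b$ are not nested, this is a genuine (if standard) point, and your device --- solve the linear equation by contraction in $\PM^2\cap\PM^b$ and then identify the solution with $w_1-w_2$ by uniqueness of the linear fixed point in $\PM^2$ --- closes it cleanly. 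Second, a small citation slip: Lemma \ref{lemEmbedding} as stated only covers $b\in[0,2)$ with $q\in\left(\tfrac{3}{3-b},3\right)$, $q\geqslant 2$; for $b\in(2,3)$ the paper invokes Lemma \ref{lemEmbedding2} with $\delta=b-2$, which gives the range $q\in\left(3,\tfrac{3}{3-b}\right)$. Your $\min/\max$ formulation captures the right inequality in both regimes (the proof is the same Fourier--splitting computation with the roles of the two norms exchanged), but the reference should be to Lemma \ref{lemEmbedding2} in the second case. With that adjustment the exponents match \eqref{w1w2:3} exactly, using the $\PM^2$-bound $\|W\|_{\PM^2}\leqslant C\|g_1-g_2\|_{\PM^0}$ obtained from the same absorption with $b=2$.
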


\begin{remark} Let us illustrate how to use Theorem \ref{lemENSRegular}
in the case of $b\in (1,2)$
  to study a far field asymptotic behavior of stationary solutions. 
Consider 
\begin{equation}\label{g1:force}
g_1\in L^1(\RRR)^3\subset \PM^0
\quad \text{satisfying}
\quad  \int_{\RRR}|x|^{2-b}|g_1(x)|\dx  <\infty
\end{equation}
for some   $b\in (1,2)$ and
$$
 g_2=\beta \delta_0 \quad 
\text{with}\quad  \beta  =\int_{\RRR} g_1 (x)\dx \in \RRR.$$
More generally, we can choose $g_1$ as a finite measure on $\RRR$ with a finite moment of order $2-b$.
For these two external forces, assumption 
$g_1-g_2\in \PM^{b-2}$
is satisfied
by Proposition \ref{est:moment}, below.
Thus, by Theorem  \ref{lemENSRegular}, under a suitable smallness assumption, the corresponding solutions $w_1$ and $w_2$ 
of the stationary system \rf{eqENSNS}
satisfy 
 inequality \eqref{w1w2:3}.
In the particular case of $\beta=(\beta_1,0,0)$, the solution $w_2$ agrees with the explicit Sl\"ezkin-Landau solution \eqref{L sol} which is not integrable with the power $q\leqslant 3$ for large values of $|x|$.
Hence, Theorem~\ref{lemENSRegular} states that a far field asymptotic behavior of  stationary solutions corresponding to  integrable external forces satisfying \eqref{g1:force}
is described by the corresponding Sl\"ezkin-Landau solutions.
Similar results have been already obtained 
for  stationary solutions to the  Navier-Stokes system
on an exterior domain in \cite{KS11, KMT12} and 
on the whole space $\RRR$ in \cite{DI17}.
In Theorem \ref{lemENSRegular}, we generalize all those results by showing 
 that
 assumption $g_1-g_2\in \PM^{b-2}$ for some   $b\in (1,2)$
determines when two external forces from the space of pseudomeasure
$\PM^0$
({\it i.e.}~tempered distributions with bounded Fourier transforms) 
 lead to stationary solutions with the same asymptotic behavior as $|x|\to\infty$ .
\end{remark}

\begin{remark} 
On the other hand, Theorem \ref{lemENSRegular} for $b\in (2,3)$ 
allows us to study local singularities of stationary solutions. 
As an example, we consider again the  the Sl\"ezkin-Landau solution $U^\beta$
with a singularity at the origin which is not integrable with every  exponent $q>3$. By Theorem   \ref{lemENSRegular}, every solution $w_2$ with a small external force $g_2\in\PM^0$ such that 
\begin{equation}\label{g2beta}
\esssup_{\xi\in\RRR} |\xi|^{b-2}\big|\beta (2\pi)^{-3/2} -\widehat g_2(\xi)
\big|<\infty
\end{equation}
has a singularity at the origin which asymptotically resembles the singularity of $U^\beta$ and this  is measured by the $L^q$-norm with some $q>3$.
Note that, since $b\in (2,3)$,
 the assumption \eqref{g2beta} requires from the bounded function $\widehat g_2(\xi)$ to be close to the constant vector
 $\beta (2\pi)^{-3/2}=\beta \widehat{\delta_0}$ for large values of $|\xi|$.
\end{remark}

Finally, using the framework introduced above, we study an asymptotic stability of  solutions to the Cauchy problem \eqref{eqNSNS}
 corresponding to singular external forces.

\begin{theorem}\label{thm_self_similar}
Let $u_i$ with $i\in \{1,2\}$ be  solutions to the Cauchy problem \rf{eqNSNS}
with  initial data $u_{0i} \in \PM^2$ and external forces $f_i\in \X^0$
such that $\| u_{0i}\|_{\PM^2} <\ve$ and $\|f_i \|_{\X^0}<\ve$,
where $\ve>0$ is provided by  Proposition \ref{thmNSExistence}.
Suppose that there exists $\delta\in (0,1)$ such that 
\begin{equation}\label{f1f2delta}
\sup_{t>0} t^{\frac{\delta}{2}}\|f_1(t)-f_2(t)\|_{\PM^\delta}<\infty.
\end{equation}
Then, for every  $q\in \left(3,\frac{3}{1-\delta}\right)$ there exists a constant $C=C(q)>0$ such that 
\begin{equation}\label{u1u2:q}
\| u_1(\cdot, t) - u_2(\cdot,t)\|_q \leqslant Ct^{-\frac{1}{2}+\frac{3}{2q}} \qquad \textit{for all}\quad t>0.
\end{equation}
If, moreover, 
\begin{equation}\label{f1f2delta:1}
\lim_{t\to \infty} \|f_1(t)-f_2(t)\|_{\PM^0}=0
\quad \text{and}\quad \lim_{t\to \infty} \|S(t)\big(u_{01}-u_{02}\big)\|_{\PM^2}=0
\end{equation}
then
\begin{equation}\label{u1u2:dec}
\lim_{t\to\infty} t^{\frac{1}{2}-\frac{3}{2q}}
 \| u_1(\cdot, t) - u_2(\cdot,t)\|_q =0.
\end{equation}
\end{theorem}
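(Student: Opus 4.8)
The plan is to study the difference $w:=u_1-u_2$. Writing the integral equation \eqref{int_eq} for $u_1$ and for $u_2$, subtracting, and using bilinearity of $B$ through $B(u_1,u_1)-B(u_2,u_2)=B(u_1,w)+B(w,u_2)$, one obtains the affine equation
\begin{equation*}
w=w_0+L(w),\qquad w_0(t):=S(t)(u_{01}-u_{02})+F_1(t)-F_2(t),\qquad L(v):=-B(u_1,v)-B(v,u_2),
\end{equation*}
where $F_i(t)=\int_0^{t}S(t-\tau)\Projector f_i(\tau)\ud\tau$ and $L$ is linear. By the bilinear estimate used to prove Proposition~\ref{thmNSExistence} one has $\|L(v)\|_{\mathcal X^2}\leqslant C\big(\|u_1\|_{\mathcal X^2}+\|u_2\|_{\mathcal X^2}\big)\|v\|_{\mathcal X^2}$, and that proposition gives $\|u_i\|_{\mathcal X^2}\leqslant 2C\varepsilon$; hence, after possibly decreasing $\varepsilon$, $L$ is a contraction on $\mathcal X^2$. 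Since also $w_0\in\mathcal X^2$ (for the term $F_1-F_2$ this is the estimate $\|F_1-F_2\|_{\mathcal X^2}\lesssim\|f_1-f_2\|_{\X^0}$ from the same proof), $w$ is the unique fixed point of $L$ in $\mathcal X^2$, and $\|w\|_{\mathcal X^2}\lesssim\|u_{01}-u_{02}\|_{\PM^2}+\|f_1-f_2\|_{\X^0}$.

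The next step is to upgrade this to a time-weighted bound. Fix $q\in\big(3,\tfrac{3}{1-\delta}\big)$, choose $b\in\big(3-\tfrac3q,\,2+\delta\big)$ (a nonempty interval, since $q>3$ and $q<\tfrac{3}{1-\delta}$), and set $\mu:=\tfrac{b-2}{2}\in\big(0,\tfrac12\big)$ and $\|v\|_{\mathcal Y}:=\sup_{t>0}t^{\mu}\|v(t)\|_{\PM^b}$. First, $w_0\in\mathcal Y$: the heat estimate $\|S(t)g\|_{\PM^b}\leqslant t^{-(b-2)/2}\|g\|_{\PM^2}$ handles the first term, while from \eqref{f1f2delta} together with $|\widehat{F_1-F_2}(\xi,t)|\lesssim\int_0^t e^{-(t-\tau)|\xi|^2}|\xi|^{-\delta}\tau^{-\delta/2}\ud\tau$, splitting at $\tau=t/2$ and using $b-\delta<2$, one obtains $\|F_1(t)-F_2(t)\|_{\PM^b}\lesssim t^{-\mu}$. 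Second, the Fourier-side bilinear estimate
\begin{equation*}
\|B(u,v)(t)\|_{\PM^b}\lesssim\sup_{\xi\in\RRR}|\xi|^2\int_0^t e^{-(t-\tau)|\xi|^2}\|u(\tau)\|_{\PM^2}\|v(\tau)\|_{\PM^b}\ud\tau,
\end{equation*}
obtained from $|\widehat{\Projector}|\leqslant1$, $\widehat{u\otimes v}=c\,\widehat u\ast\widehat v$ and the convolution bound $\int_{\RRR}|\eta|^{-2}|\xi-\eta|^{-b}\ud\eta=c_b|\xi|^{1-b}$ (valid for $1<b<3$), combined with $\sup_{\xi}|\xi|^2\int_0^t e^{-(t-\tau)|\xi|^2}\tau^{-\mu}\ud\tau\lesssim t^{-\mu}$, shows $\|L\|_{\mathcal Y\to\mathcal Y}\lesssim\|u_1\|_{\mathcal X^2}+\|u_2\|_{\mathcal X^2}\lesssim\varepsilon<1$. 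Thus $L$ is a contraction on $\mathcal X^2\cap\mathcal Y$, so $w\in\mathcal Y$ and $\|w\|_{\mathcal Y}\lesssim\|w_0\|_{\mathcal Y}$. Finally, the interpolation inequality of Lemma~\ref{lemEmbedding} between $\PM^2$ and $\PM^b$ gives $\|w(t)\|_q\lesssim\|w(t)\|_{\PM^2}^{1-\theta}\|w(t)\|_{\PM^b}^{\theta}$ with $\theta=\tfrac{q-3}{q(b-2)}\in(0,1)$ and $\mu\theta=\tfrac12-\tfrac{3}{2q}$, which is \eqref{u1u2:q}.

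For the final assertion, assume \eqref{f1f2delta:1}. I would first show $t^{\mu}\|w_0(t)\|_{\PM^b}\to0$: factoring $|\xi|^b e^{-t|\xi|^2}=|\xi|^{b-2}e^{-t|\xi|^2/2}\cdot|\xi|^2 e^{-t|\xi|^2/2}$ gives $t^{\mu}\|S(t)(u_{01}-u_{02})\|_{\PM^b}\lesssim\|S(t/2)(u_{01}-u_{02})\|_{\PM^2}\to0$, while for the force part, given $\eta>0$ one picks $T$ with $\|f_1(\tau)-f_2(\tau)\|_{\PM^0}<\eta$ for $\tau>T$, splits the Duhamel integral at $T$, and uses \eqref{f1f2delta} on $[0,T]$ and the pointwise bound $|\widehat{(f_1-f_2)}(\xi,\tau)|\leqslant\eta^{1-\theta_0}\big(C\tau^{-\delta/2}|\xi|^{-\delta}\big)^{\theta_0}$ with $\theta_0\in\big(\tfrac{b-2}{\delta},1\big)$ on $[T,t]$, obtaining $\limsup_{t\to\infty}t^{\mu}\|F_1(t)-F_2(t)\|_{\PM^b}\lesssim\eta^{1-\theta_0}$, hence the limit is $0$. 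This decay is then propagated through the Neumann series $w=\sum_{k\geqslant0}L^k(w_0)$, convergent in $\mathcal Y$: if $v\in\mathcal Y$ and $t^{\mu}\|v(t)\|_{\PM^b}\to0$, then writing $\|v(\tau)\|_{\PM^b}=\tau^{-\mu}g(\tau)$ with $g$ bounded and $g(\tau)\to0$, the bilinear estimate above and the elementary fact that $t^{\mu}\sup_{\xi}|\xi|^2\int_0^t e^{-(t-\tau)|\xi|^2}\tau^{-\mu}g(\tau)\ud\tau\to0$ give $t^{\mu}\|L(v)(t)\|_{\PM^b}\to0$; by induction each $t^{\mu}\|L^k(w_0)(t)\|_{\PM^b}\to0$, and since $\sum_k\|L^k(w_0)\|_{\mathcal Y}<\infty$ the tails of the series are uniformly small in $t$, whence $t^{\mu}\|w(t)\|_{\PM^b}\to0$. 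Interpolating once more against $\sup_t\|w(t)\|_{\PM^2}\leqslant\|w\|_{\mathcal X^2}<\infty$ yields $t^{1/2-3/(2q)}\|w(t)\|_q\lesssim\|w(t)\|_{\PM^2}^{1-\theta}\big(t^{\mu}\|w(t)\|_{\PM^b}\big)^{\theta}\to0$, which gives \eqref{u1u2:dec}.

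The Fourier-side heat and bilinear computations are routine. I expect two points to need care: (i) choosing $b$ in the narrow window $\big(3-\tfrac3q,\,2+\delta\big)$ so that the time-weighted $\PM^\delta$-bound \eqref{f1f2delta} is just strong enough to place $F_1-F_2$ into $\mathcal Y$; and, more seriously, (ii) in the limit statement, propagating the decay $t^{\mu}\|\cdot\|_{\PM^b}\to0$ through the nonlinearity, which cannot be done directly in the fixed-point equation (that would be circular) and must instead go through the Neumann series, the most delicate sub-estimate being $t^{\mu}\|F_1(t)-F_2(t)\|_{\PM^b}\to0$, which forces one to exploit $\|f_1-f_2\|_{\PM^0}\to0$ and the time-weighted $\PM^\delta$-bound simultaneously.
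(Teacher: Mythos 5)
Your proof is correct. The first half is essentially the paper's argument: the paper estimates $V=u_1-u_2$ in the weighted norm $\sup_{t>0}t^{\delta/2}\|V(t)\|_{\PM^{2+\delta}}$ via an absorption argument (Lemmas \ref{lemm_heat_PM}, \ref{lemma_2delta_norm}, \ref{lemma_f_2delta}) and then interpolates with Lemma \ref{lemEmbedding2}; you do the same thing with $\PM^{b}$ for $b$ strictly inside $\left(3-\tfrac3q,\,2+\delta\right)$ rather than at the endpoint $b=2+\delta$, which is only a cosmetic difference (and note that the interpolation you invoke is Lemma \ref{lemEmbedding2} with $\delta=b-2$, not Lemma \ref{lemEmbedding}, which is the $q<3$ statement). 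The second half is where you genuinely diverge. The paper keeps $t^{\delta/2}\|V(t)\|_{\PM^{2+\delta}}$ merely bounded and imports $\lim_{t\to\infty}\|V(t)\|_{\PM^2}=0$ from \cite[Thm.~5.1]{CK04}, so the decay in \eqref{u1u2:dec} comes from the $\PM^2$ factor in the interpolation. You instead keep $\|V(t)\|_{\PM^2}$ merely bounded and prove directly that $t^{\mu}\|V(t)\|_{\PM^b}\to 0$, first for the linear part $w_0$ (splitting the Duhamel integral at a large time $T$ and interpolating the force between $\PM^0$ and $\PM^\delta$ with an exponent $\theta_0\in\left(\tfrac{b-2}{\delta},1\right)$) and then propagating the decay through the Neumann series $w=\sum_k L^k(w_0)$, correctly noting that one cannot argue directly on the fixed-point equation. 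Your route is longer but self-contained, replacing the external stability theorem by an explicit argument; the paper's is shorter at the price of the citation. Both yield \eqref{u1u2:q} and \eqref{u1u2:dec}, and both ultimately need the same tacit shrinking of $\ve$ to make the linear operator a contraction.
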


\begin{remark}
In the particular case of a (sufficiently small) time independent  external force $f(\cdot, t)=g\in L^1(\RRR)^3$, Theorem \ref{thm_self_similar}
 states that each solution $u=u(x,t)$  of  the Cauchy problem \rf{eqNSNS}
with  an (sufficiently small) initial datum $u_{0} \in \PM^2$
converges as $t\to\infty$ towards a stationary solution satisfying system 
\eqref{eqENSNS} and this convergence holds true in the $L^q$-norm for each $q\in (3, \infty)$ (notice that condition \eqref{f1f2delta} is fulfiled for each $\delta \in (0,1)$ if $f_1=f_2$). 
Such an asymptotic stability result for Leray stationary solutions was obtained in \cite{BS09}.
We do not require from solutions to have a finite energy and our approach resembles the one  used in \cite{GHS97}.
\end{remark}

\begin{remark}
One should be careful with using the stability result from Theorem \ref{thm_self_similar} in the case of singular external forces.
Indeed, if $u_0\in\PM^2$ is homogeneous of degree $-2$ and $f\in\X^0$ is invariant under the scaling \eqref{f:scal}, then the corresponding solution is self-similar  \cite[Corollary 4.1]{CK04}:
$
u(x,t) =  t^{-1/2}u\left(x t^{-1/2}, 1\right)
$
for all $t>0$.
Thus, if moreover, $u(\cdot, 1)\in L^q(\RRR)$ then
$$
\|u(\cdot, t)\|_q= 
t^{-\frac{1}{2}+\frac{3}{2q}} \|u(\cdot, 1)\|_q 
\qquad \mbox{for all}\quad t>0.
$$
In such a case, inequality  \eqref{u1u2:q} does not say anything about the asymptotic behavior of solutions.
 However, in the particular case of $f=\beta_1 e_1 \delta_0$ the corresponding stationary solution $U^\beta$  is explicit \eqref{L sol}, homogeneous of degree $-1$ and self-similar. Moreover,  
 $U^\beta\notin L^q(\RRR)$ for each $q\in [1,\infty]$.
 In this case, inequality  \eqref{u1u2:q} states that each solution $u=u(x,t)$  of  the Cauchy problem \rf{eqNSNS}
with  an (sufficiently small) initial datum $u_{0} \in \PM^2$
and with the external force $f=\beta_1 e_1 \delta_0$
has a singularity at zero of the same shape as the Sl\"ezkin-Landau solution $U^{\beta_1}$. This approach was used in \cite{KZ15} in the study of singularities of solutions to problem \eqref{NSsing}.
Thus, additional assumptions \eqref{f1f2delta:1} are needed in the study of a large time behavior of singular solutions.
\end{remark}

\begin{remark}
Asymptotic results in Theorem \ref{thm_self_similar} cannot be true for 
any $q<3$ due to the estimates in Theorem \ref{corNSLandau}.
\end{remark}

\section{Review of estimates in $\PM^{a}$-spaces}\label{estimates}

First, we gather estimates which involve $\PM^a$-norms and which 
play a crucial role in the proofs of results from Section \ref{results}. Some of the following results can be found in the previous works \cite{CK04, KZ15, KSS20}. For a completeness of the exposition, we recall them in a form, which will be convenient in the proofs of results from this paper.

\subsection{Embedding theorems}

The following three lemmas state that tempered distributions from the space $\PM^a$ for a certain range of the exponent $a>0$, are in fact locally integrable functions.

\begin{lemma}\label{KPS-lem}
The following embedding holds true
\begin{equation*}
  \PM^{a}\subset L^{\frac{3}{3-a},\infty}(\RRR),
\qquad \text{for each}\quad a\in \left(\tfrac{3}{2},3\right),
\end{equation*}
where $L^{\frac{3}{3-a},\infty}(\RRR)$ is the  weak $L^p$-space (the Marcinkiewicz space). 
\end{lemma}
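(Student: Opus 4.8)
The statement to prove is that if $u \in \PM^a$ with $a \in (3/2, 3)$, then $u \in L^{3/(3-a),\infty}(\RRR)$, i.e. $u$ is a function lying in the indicated weak Lebesgue space, with a norm bound in terms of $\|u\|_{\PM^a}$.

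Let me think about this.

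We have $u$ a tempered distribution with $\widehat{u} \in L^1_{loc}$ and $|\xi|^a|\widehat{u}(\xi)| \le \|u\|_{\PM^a}$ a.e. The natural approach: $\widehat{u}(\xi)$ is dominated pointwise by $\|u\|_{\PM^a} |\xi|^{-a}$. The function $|\xi|^{-a}$ on $\RRR$ is, for $a \in (0,3)$... well it's not in $L^1$ but it's locally $L^1$ near 0 (needs $a < 3$) and $L^1$ at infinity needs $a > 3$, so it's not globally integrable. But the Fourier transform of $|\xi|^{-a}$ is (up to constant) $|x|^{-(3-a)}$, a homogeneous distribution. So $u = (\widehat{u})^\vee$ should be controlled by the inverse Fourier transform of $|\xi|^{-a}$, which is $c|x|^{a-3}$, and $|x|^{a-3} = |x|^{-(3-a)}$ is exactly in $L^{3/(3-a),\infty}(\RRR)$ (the homogeneous function $|x|^{-\beta}$ is in weak-$L^{3/\beta}$ on $\RRR$). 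Here $3 - a \in (0, 3/2)$, so $3/(3-a) \in (2,\infty)$ — consistent.

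So the plan:

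\textbf{Step 1.} Write $\widehat{u}(\xi) = m(\xi) \cdot \frac{\|u\|_{\PM^a}}{|\xi|^a}$ where $\|m\|_\infty \le 1$; more cleanly, observe $|\widehat u(\xi)| \le \|u\|_{\PM^a}|\xi|^{-a}$ pointwise a.e.

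\textbf{Step 2.} Split the frequency space dyadically or into low/high frequencies at scale $R$ (to be optimized depending on the level set we're estimating). Write $u = u_{<R} + u_{>R}$ where $u_{<R} = (\widehat u \mathbf{1}_{|\xi|<R})^\vee$ and $u_{>R} = (\widehat u \mathbf{1}_{|\xi|\ge R})^\vee$.

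\textbf{Step 3.} Estimate $\|u_{<R}\|_{L^\infty}$: by Hausdorff–Young / direct bound, $\|u_{<R}\|_\infty \le (2\pi)^{-3/2}\int_{|\xi|<R}|\widehat u(\xi)|\,d\xi \le C\|u\|_{\PM^a}\int_{|\xi|<R}|\xi|^{-a}\,d\xi = C\|u\|_{\PM^a} R^{3-a}$ (finite since $a<3$).

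\textbf{Step 4.} Estimate $\|u_{>R}\|_{L^2}$: by Plancherel, $\|u_{>R}\|_2^2 = \int_{|\xi|\ge R}|\widehat u(\xi)|^2\,d\xi \le \|u\|_{\PM^a}^2\int_{|\xi|\ge R}|\xi|^{-2a}\,d\xi = C\|u\|_{\PM^a}^2 R^{3-2a}$ (finite since $2a > 3$, which is where the hypothesis $a > 3/2$ enters). So $\|u_{>R}\|_2 \le C\|u\|_{\PM^a}R^{(3-2a)/2} = C\|u\|_{\PM^a}R^{3/2 - a}$.

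\textbf{Step 5.} Conclude via the standard weak-type characterization. For the level set $\{|u| > \lambda\}$, take $R$ with $C\|u\|_{\PM^a}R^{3-a} = \lambda/2$ so that $|u_{<R}| \le \lambda/2$ everywhere, hence $\{|u|>\lambda\} \subset \{|u_{>R}| > \lambda/2\}$; by Chebyshev, $|\{|u_{>R}|>\lambda/2\}| \le (2/\lambda)^2\|u_{>R}\|_2^2 \le C\lambda^{-2}\|u\|_{\PM^a}^2 R^{3-2a}$. With $R \sim (\lambda/\|u\|_{\PM^a})^{1/(3-a)}$ this gives $|\{|u|>\lambda\}| \le C\|u\|_{\PM^a}^{p}\lambda^{-p}$ with $p = 3/(3-a)$ after collecting powers (the exponent bookkeeping: $\lambda^{-2}\cdot(\lambda^{1/(3-a)})^{3-2a} = \lambda^{-2 + (3-2a)/(3-a)} = \lambda^{-3/(3-a)}$), which is exactly $u \in L^{3/(3-a),\infty}$ with $\|u\|_{L^{3/(3-a),\infty}} \le C\|u\|_{\PM^a}$.

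\textbf{Main obstacle.} There is no deep obstacle; this is the classical Bernstein-plus-interpolation argument. The one point requiring care is the interplay of the two constraints $a < 3$ (so the low-frequency integral of $|\xi|^{-a}$ converges) and $a > 3/2$ (so the high-frequency $L^2$ integral of $|\xi|^{-2a}$ converges) — these are precisely the endpoints in the hypothesis, and one must make sure the splitting argument uses exactly both. A minor technical point is justifying that $u_{>R}\in L^2$ and $u_{<R}\in L^\infty$ as genuine functions (not just tempered distributions) and that $u = u_{<R}+u_{>R}$ as such — this is immediate from $\widehat u \in L^1_{loc}$ and the above integrability estimates. One could alternatively (and perhaps more slickly) cite that $|\xi|^{-a}$ lies in $L^{3/a,\infty} = L^{(3/(3-a))',\infty}$ and invoke a duality/real-interpolation version of Hausdorff–Young mapping $L^{p',\infty}$ (of the Fourier side) into $L^{p,\infty}$, but the hands-on dyadic argument above is self-contained and is the version I would write out.
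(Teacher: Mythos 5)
Your proof is correct, and the exponent bookkeeping in Step 5 checks out: $-2+(3-2a)/(3-a)=-3/(3-a)$ and the power of $\|u\|_{\PM^a}$ comes out to $3/(3-a)$, giving exactly the weak-type bound $\bigl|\{|u|>\lambda\}\bigr|\leqslant C\bigl(\|u\|_{\PM^a}/\lambda\bigr)^{3/(3-a)}$. However, your route is genuinely different from the paper's. The paper argues by duality: for a test function $\vf$ it writes $\bigl|\int w\vf\bigr|=\bigl|\int\widehat w\,\overline{\widehat\vf}\bigr|\leqslant\|w\|_{\PM^a}\int|\xi|^{-a}|\widehat\vf(\xi)|\ud\xi$, applies the H\"older inequality in Lorentz spaces with $|\cdot|^{-a}\in L^{3/a,\infty}$, then the Hausdorff--Young inequality in Lorentz spaces (Kolyada) $\|\widehat\vf\|_{L^{p',r}}\leqslant C\|\vf\|_{L^{p,r}}$ with $p=3/a\in(1,2)$ --- which is where $a\in(3/2,3)$ enters --- to conclude that $w$ is a bounded functional on $L^{3/a,1}$ and hence lies in its dual $L^{3/(3-a),\infty}$. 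This is essentially the ``more slick'' alternative you mention at the end of your write-up. Your main argument instead splits frequencies at a scale $R$ tuned to the level $\lambda$, uses the two convergent integrals $\int_{|\xi|<R}|\xi|^{-a}\ud\xi$ (requiring $a<3$) and $\int_{|\xi|\geqslant R}|\xi|^{-2a}\ud\xi$ (requiring $a>3/2$) for $L^\infty$ and $L^2$ bounds respectively, and finishes with Chebyshev. What your version buys is self-containedness: it needs only Plancherel and no Lorentz-space machinery (no O'Neil H\"older inequality, no Lorentz Hausdorff--Young, no duality of $L^{p,1}$ and $L^{p',\infty}$); what the paper's version buys is brevity given those cited tools, and it makes transparent that the result is really a weak-type Hausdorff--Young statement. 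Both proofs use the two endpoint constraints in the same essential way.
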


\begin{proof}
Let $w\in \PM^a$ for some $a\in \left(\frac{3}{2},3\right)$ and let $\vf \in C_c^\infty (\RRR)$ be an arbitrary function. Properties of the Fourier transform combined with the H\"older inequality in the Lorentz spaces (see e.g.~\cite{O'N63}) yield 
\begin{align*}
   \bigg| \int_{\RRR}w \vf \ud x \bigg|  &=\bigg| \int_{\RRR}\widehat {w} (\xi ) \overline{\widehat{\vf }(\xi )} \ud \xi \bigg| \\
   &\leqslant \| w\|_{{\PM}^a}\int_{\RRR} |\xi |^{-a}\big|\widehat{\vf }(\xi ) \big| \ud \xi \\
   &\leqslant \| w\|_{{\PM}^a}\| |\cdot |^{-a}\|_{L^{\frac{3}{a},\infty }}\| \widehat{\vf }\|_{L^{\frac{3}{3-a},1}}.
\end{align*}
Now, it suffices to apply the Hausdorff-Young inequality in the Lorentz spaces \cite{K97}
\begin{equation*}
   \|\widehat \vf\|_{L^{p',r}} \leqslant C\| \vf \|_{ L^{p,r}}, \quad \textrm{where}\ \ 1<p<2, \ 1\leqslant r<\infty , \quad \textrm{and }\ \ \tfrac{1}{p'}+\tfrac{1}{p}=1,
\end{equation*}
to obtain
\begin{equation*}
   \bigg| \int_{\RRR}w \vf  \ud x \bigg|  \leqslant C\| w\|_{{\PM}^a} \| \vf \|_{L^{\frac{3}{a}, 1}}
\end{equation*}
for all $\vf \in C^\infty_c(\RRR)$ and, by a density argument, for all $\vf \in L^{\frac{3}{a},1}(\RRR)$. Hence, the distribution $w$ defines a continuous linear functional on 
$ L^{\frac{3}{a},1}(\RRR)$, consequently, $w\in  L^{\frac{3}{3-a},\infty}(\RRR)$ (see e.g.~\cite[Thm. 1.4.17]{G14}).
\end{proof}

\begin{lemma}\label{lemEmbedding}
For each $b\in\left[0,2\right)$ and $q\in \left(\frac{3}{3-b},3\right)$ such that $q\geqslant 2$,
the following embedding holds true
$
  \PM^{2}\cap\PM^{b} \subset L^q(\RRR)  .
$
Moreover, there exists a constant $C=C(b,q)$ such that
\begin{equation}\label{PM:Lp}
  \|w\|_{L^{q}}\leqslant C\|w\|_{\PM^{2}}^{1-\frac{q-3}{q(b-2)}}\|w\|_{\PM^{b}}^{\frac{q-3}{q(b-2)}}.
\end{equation}
\end{lemma}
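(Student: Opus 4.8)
The plan is to work on the Fourier side and invoke the Hausdorff--Young inequality, which is available here precisely because the conjugate exponent $q'=q/(q-1)$ satisfies $\tfrac32<q'\leqslant 2$ exactly when $q\in[2,3)$. Since $\widehat w$ is a locally integrable function by the definition \eqref{PM_space}, the very definition of the $\PM^a$-norms gives, for a.e.\ $\xi\in\RRR$,
\[
|\widehat w(\xi)|\leqslant g(\xi):=\min\Big\{\|w\|_{\PM^2}\,|\xi|^{-2},\ \|w\|_{\PM^b}\,|\xi|^{-b}\Big\}.
\]
Hence it suffices to show that $g\in L^{q'}(\RRR)$ together with the bound
$\|g\|_{L^{q'}}\leqslant C\,\|w\|_{\PM^2}^{1-\frac{q-3}{q(b-2)}}\,\|w\|_{\PM^b}^{\frac{q-3}{q(b-2)}}$:
then $\widehat w\in L^{q'}(\RRR)$ with $1\leqslant q'\leqslant 2$, and the Hausdorff--Young inequality (Plancherel's identity when $q=2$) shows $w\in L^q(\RRR)$ with $\|w\|_{L^q}\leqslant C\|\widehat w\|_{L^{q'}}\leqslant C\|g\|_{L^{q'}}$, which is \eqref{PM:Lp}.

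For the bound on $\|g\|_{L^{q'}}$ I may assume $\|w\|_{\PM^2}>0$ and $\|w\|_{\PM^b}>0$ (otherwise $\widehat w=0$ a.e.\ and $w=0$), and I split $\RRR$ at the radius $\rho:=\big(\|w\|_{\PM^2}/\|w\|_{\PM^b}\big)^{1/(2-b)}\in(0,\infty)$, which is exactly where the two competing bounds defining $g$ cross (recall $2-b>0$): on the ball $\{|\xi|\leqslant\rho\}$ the minimum is the $\PM^b$-term, on $\{|\xi|>\rho\}$ it is the $\PM^2$-term. This reduces $\|g\|_{L^{q'}}^{q'}$ to $\|w\|_{\PM^b}^{q'}\int_{|\xi|\leqslant\rho}|\xi|^{-bq'}\ud \xi+\|w\|_{\PM^2}^{q'}\int_{|\xi|>\rho}|\xi|^{-2q'}\ud \xi$; the first integral is finite iff $bq'<3$, i.e.\ $q>\tfrac{3}{3-b}$, and the second iff $2q'>3$, i.e.\ $q<3$, so the two hypotheses $\tfrac{3}{3-b}<q<3$ are precisely the integrability conditions at the origin and at infinity. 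Substituting the value of $\rho$, a short computation shows each piece equals a universal constant times $\|w\|_{\PM^2}^{\alpha}\|w\|_{\PM^b}^{\beta}$ with $\alpha=\tfrac{3-bq'}{2-b}$ and $\beta=\tfrac{2q'-3}{2-b}$; since $\alpha+\beta=q'$ and $\beta/q'=\tfrac{q-3}{q(b-2)}$, taking $q'$-th roots gives the asserted estimate on $\|g\|_{L^{q'}}$, with the two exponents adding up to $1$ as in \eqref{PM:Lp}.

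This is essentially bookkeeping of homogeneities, so I do not expect a serious obstacle. The points that need care are: (i) checking that the two integrability thresholds coincide \emph{exactly} with the hypotheses $\tfrac{3}{3-b}<q<3$, and hence that $\tfrac{q-3}{q(b-2)}\in(0,1)$, so that \eqref{PM:Lp} is a genuine interpolation inequality; (ii) matching the powers of $\rho$ so that the contributions from $\{|\xi|\leqslant\rho\}$ and $\{|\xi|>\rho\}$ carry the \emph{same} pair of exponents $(\alpha,\beta)$ --- this is what keeps the constant finite and produces precisely the powers in \eqref{PM:Lp}; and (iii) noticing that the assumption $q\geqslant2$ enters only to guarantee $q'\leqslant 2$, i.e.\ to put $\widehat w$ in the range of validity of Hausdorff--Young. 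An alternative, slightly less economical route would interpolate the two bounds $|\widehat w(\xi)|\leqslant\|w\|_{\PM^2}|\xi|^{-2}$ and $|\widehat w(\xi)|\leqslant\|w\|_{\PM^b}|\xi|^{-b}$ to get $w\in\PM^{a}$ for any $a\in[b,2]$, apply Lemma~\ref{KPS-lem} to land in a weak-$L^r$ space for $r\in(2,3]$, and then upgrade to strong $L^q$ by real interpolation between two nearby such exponents; the direct Fourier computation above avoids this detour.
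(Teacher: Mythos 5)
Your proof is correct and follows essentially the same route as the paper's: Hausdorff--Young reduces the claim to an $L^{q'}$ bound on $\widehat w$, which you obtain by splitting the frequency integral at the radius where the two pointwise bounds $\|w\|_{\PM^2}|\xi|^{-2}$ and $\|w\|_{\PM^b}|\xi|^{-b}$ cross --- exactly the optimal cut-off $K$ the paper selects at the end of its computation. The exponent bookkeeping (including the identification $\beta/q'=\tfrac{q-3}{q(b-2)}$ and the role of $q\geqslant 2$ in making Hausdorff--Young applicable) checks out.
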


\begin{proof}
For each couple $(q,r)$ of positive numbers  satisfying 
 $$  
 \frac{1}{q}+\frac{1}{r}=1 
 \quad \text{with }\quad  q\geqslant 2\quad \text{and}\quad q\in \left(\frac{3}{3-b},3\right),
 $$ 
we obtain the inequalities    $2r>3$ and $br<3$. 
 Thus, by  the classical Hausdorff-Young inequality with these exponents, 
 we obtain
\begin{equation} \label{wLqLr}
  \begin{split}
  \|w\|_{L^{q}}^{r}\leqslant & C\|\widehat{w}\|_{L^{r}}^{r}\\
  = & C\int_{\RRR}|\widehat{w}(\xi)|^{r}\ud \xi\\
  = & C\int_{|\xi|\leqslant K}|\widehat{w}(\xi)|^{r}\ud \xi+C\int_{|\xi|\geqslant K}|\widehat{w}(\xi)|^{r}\ud \xi\\
  \leqslant & C\|w\|_{\PM^{b}}^{r}\int_{|\xi|\leqslant K}|\xi|^{-br}\ud \xi+C\|w\|_{\PM^{2}}^{r}\int_{|\xi|\geqslant K}|\xi|^{-2r}\ud \xi\\
  \leqslant & C\|w\|_{\PM^{b}}^{r}K^{3-br}+C\|w\|_{\PM^{2}}^{r}K^{3-2r}.
  \end{split}
\end{equation}
Now, we choose
\begin{equation*}
  K=\left(\frac{\|w\|_{\PM^{b}}}{\|w\|_{\PM^{2}}}\right)^{\frac{1}{b-2}}
  \quad\text{and}\quad r=\frac{q}{q-1}
\end{equation*}
to obtain inequality \eqref{PM:Lp}.
\end{proof}

\begin{remark}
Notice that if $b\in\left[0,3/2\right)$ and $q\in \left(\frac{3}{3-b},2\right)$, the proof of Lemma \ref{lemEmbedding} implies only that each 
$w\in  \PM^{2}\cap\PM^{b}$ satisfies 
$\widehat w\in L^\frac{q}{q-1}(\RRR)$.
\end{remark}

\begin{lemma}\label{lemEmbedding2}
For each $\delta \in(0,1)$ and $q\in \left(3,\frac{3}{1-\delta} \right)$,
the following embedding holds true
$
  \PM^{2}\cap\PM^{2+\delta } \subset L^q(\RRR)  .
$
Moreover, there exists a constant $C=C(\delta,q)$ such that
\begin{equation}\label{PM:Lp:2}
  \|w\|_{L^{q}}\leqslant C\|w\|_{\PM^{2}}^{1-\frac{q-3}{\delta q}}\|w\|_{\PM^{2+\delta}}^{\frac{q-3}{\delta q}}.
\end{equation}
\end{lemma}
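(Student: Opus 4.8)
The plan is to follow the proof of Lemma \ref{lemEmbedding} almost verbatim, only swapping which pseudomeasure norm controls which frequency range: in the present regime $q>3$ the low frequencies are handled by the $\PM^2$-bound and the high frequencies by the (stronger, for $|\xi|$ large) $\PM^{2+\delta}$-bound. Fix $w\in\PM^2\cap\PM^{2+\delta}$ and put $r=q/(q-1)$, so that $1/q+1/r=1$. A direct computation shows that the assumption $q\in\bigl(3,\tfrac{3}{1-\delta}\bigr)$ is equivalent to the two inequalities $2r<3$ and $(2+\delta)r>3$; these are exactly the two integrability thresholds that will make the relevant frequency integrals converge.

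First I would check that $\widehat w\in L^r(\RRR)$. Splitting at a radius $K>0$ and using $|\widehat w(\xi)|\le\|w\|_{\PM^2}|\xi|^{-2}$ for $|\xi|\le K$ together with $|\widehat w(\xi)|\le\|w\|_{\PM^{2+\delta}}|\xi|^{-2-\delta}$ for $|\xi|\ge K$,
\begin{align*}
\int_{\RRR}|\widehat w(\xi)|^{r}\ud\xi
&\le C\|w\|_{\PM^{2}}^{r}\int_{|\xi|\le K}|\xi|^{-2r}\ud\xi
+C\|w\|_{\PM^{2+\delta}}^{r}\int_{|\xi|\ge K}|\xi|^{-(2+\delta)r}\ud\xi\\
&\le C\|w\|_{\PM^{2}}^{r}K^{3-2r}+C\|w\|_{\PM^{2+\delta}}^{r}K^{3-(2+\delta)r},
\end{align*}
where both integrals are finite precisely because $2r<3$ and $(2+\delta)r>3$. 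By the Hausdorff--Young inequality in the form already used in the proof of Lemma \ref{lemEmbedding} (here $1<r<2$ since $q>3$), the tempered distribution $w$ is represented by an $L^q$-function and $\|w\|_{L^{q}}\le C\|\widehat w\|_{L^{r}}$, hence
\begin{equation*}
\|w\|_{L^{q}}^{r}\le C\|w\|_{\PM^{2}}^{r}K^{3-2r}+C\|w\|_{\PM^{2+\delta}}^{r}K^{3-(2+\delta)r}\qquad\text{for every }K>0.
\end{equation*}

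It then remains to optimise in $K$. Choosing $K=\bigl(\|w\|_{\PM^{2+\delta}}/\|w\|_{\PM^{2}}\bigr)^{1/\delta}$ balances the two terms and gives
\begin{equation*}
\|w\|_{L^{q}}\le C\|w\|_{\PM^{2}}^{\,1-\frac{3-2r}{\delta r}}\,\|w\|_{\PM^{2+\delta}}^{\,\frac{3-2r}{\delta r}},
\end{equation*}
and substituting $r=q/(q-1)$, so that $3-2r=\frac{q-3}{q-1}$ and $\delta r=\frac{\delta q}{q-1}$, yields $\frac{3-2r}{\delta r}=\frac{q-3}{\delta q}$, which is exactly the exponent appearing in \eqref{PM:Lp:2}. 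There is no genuine obstacle: the only points needing a little care are verifying that the admissible range of $q$ coincides with the range for which both frequency integrals converge, and checking that the interpolation exponent produced by the optimisation in $K$ simplifies to $\frac{q-3}{\delta q}$; both are elementary, and the argument is structurally identical to that of Lemma \ref{lemEmbedding}.
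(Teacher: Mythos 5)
Your argument is correct and is exactly what the paper intends: the proof of Lemma \ref{lemEmbedding2} is stated as "proceed analogously to Lemma \ref{lemEmbedding} with modified exponents," and you have carried out precisely that modification — splitting at radius $K$, using the $\PM^2$ bound at low frequencies and the $\PM^{2+\delta}$ bound at high frequencies (the roles being swapped relative to Lemma \ref{lemEmbedding}, as you note), applying Hausdorff--Young with $r=q/(q-1)\in(1,3/2)$, and optimising in $K$. The verification that $q\in\bigl(3,\tfrac{3}{1-\delta}\bigr)$ is equivalent to $2r<3$ and $(2+\delta)r>3$, and that the resulting exponent is $\tfrac{q-3}{\delta q}$, is accurate.
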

\begin{proof}
Here, one should proceed analogously as in the proof of Lemma \ref{lemEmbedding} and the only difference consists in a 
suitable modification of exponents 
in inequality \eqref{wLqLr}; see also \cite[Lemma 7.4]{CK04} and \cite[Lemma 3.6]{KZ15} for analogous  calculations. 
\end{proof}

\begin{proposition}\label{est:moment}
Let $b\in (1,2)$.
For every $g\in L^1(\RRR)$ such that $\int_{\RRR} |x|^{2-b} |g(x)|\ud x <\infty$ and for $\beta =\int_{\RRR} g(x)\ud x$ we have the estimate
$$
\sup_{\xi\in\RRR} |\xi|^{b-2} \big|\widehat g(\xi)-\beta\big|\leqslant
C(b) \int_{\RRR} |x|^{2-b} |g(x)|\ud x 
$$
with a constant $C(b)>0$ independent of $g$. 
\end{proposition}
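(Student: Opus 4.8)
The plan is to estimate $\widehat g(\xi)-\beta$ directly from the definition of the Fourier transform and the fact that $\beta=\int_{\RRR}g(x)\dx$ (up to the normalization constant $(2\pi)^{-3/2}$, which I will carry along or absorb into $C(b)$). Writing
$$
\widehat g(\xi)-(2\pi)^{-3/2}\beta = (2\pi)^{-3/2}\int_{\RRR}\big(e^{-ix\cdot\xi}-1\big)g(x)\dx,
$$
the task reduces to bounding $|e^{-ix\cdot\xi}-1|$ by a quantity matching the weight $|x|^{2-b}$ and the power $|\xi|^{2-b}$ that appears in the claimed estimate. The elementary inequality $|e^{i\theta}-1|\le 2$ and $|e^{i\theta}-1|\le|\theta|$ combine to give $|e^{i\theta}-1|\le 2|\theta|^{s}$ for every $s\in[0,1]$ (interpolating between the two bounds), hence with $s=2-b\in(0,1)$ for $b\in(1,2)$ we get $|e^{-ix\cdot\xi}-1|\le 2|x\cdot\xi|^{2-b}\le 2|x|^{2-b}|\xi|^{2-b}$.

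First I would fix $\xi\in\RRR\setminus\{0\}$ and apply this pointwise bound inside the integral:
$$
\big|\widehat g(\xi)-(2\pi)^{-3/2}\beta\big|\le (2\pi)^{-3/2}\int_{\RRR}\big|e^{-ix\cdot\xi}-1\big|\,|g(x)|\dx
\le 2(2\pi)^{-3/2}|\xi|^{2-b}\int_{\RRR}|x|^{2-b}|g(x)|\dx.
$$
Multiplying both sides by $|\xi|^{b-2}$ and taking the supremum over $\xi$ yields the asserted estimate with $C(b)=2(2\pi)^{-3/2}$ (or $C(b)=2$ if one states the claim with the un-normalized $\beta$, matching the phrasing in the statement where $\widehat{\delta_0}=(2\pi)^{-3/2}$ is tracked separately). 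The hypotheses $g\in L^1(\RRR)$ and $\int_{\RRR}|x|^{2-b}|g(x)|\dx<\infty$ are exactly what is needed for all integrals above to be finite and for Fubini/dominated convergence to be unnecessary — the estimate is a single application of the triangle inequality for integrals.

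I do not expect any genuine obstacle here; the only point requiring a moment of care is the interpolation inequality $|e^{i\theta}-1|\le 2|\theta|^{s}$, which follows by splitting into $|\theta|\le 1$ (use $|e^{i\theta}-1|\le|\theta|\le|\theta|^s$) and $|\theta|\ge1$ (use $|e^{i\theta}-1|\le 2\le 2|\theta|^s$). One should also note that for $b\in(1,2)$ one has $2-b\in(0,1)$, so the exponent $s=2-b$ is admissible; this is precisely why the range of $b$ is restricted in the statement. The constant $C(b)$ produced this way is in fact independent of $b$, but stating it as $C(b)$ causes no harm.
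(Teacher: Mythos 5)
Your proof is correct, and it takes a genuinely different and more elementary route than the paper. You write $\widehat g(\xi)-\widehat g(0)$ as $(2\pi)^{-3/2}\int_{\RRR}(e^{-ix\cdot\xi}-1)g(x)\dx$ and control the kernel by the interpolated bound $|e^{i\theta}-1|\leqslant 2|\theta|^{s}$ with $s=2-b\in(0,1)$; this is a one-line triangle-inequality argument, it yields a constant that is in fact independent of $b$, and it degenerates gracefully to the mean-value-theorem estimate at $b=1$ (which the paper treats as a separate remark). The paper instead invokes the Riesz-kernel identity $|\xi|^{-(2-b)}=C(b)\int_{\RRR}e^{-ix\cdot\xi}|x|^{-(1+b)}\dx$, rewrites $|\xi|^{b-2}(\widehat g(\xi)-\beta)$ as the Fourier transform of $\int_{\RRR}\bigl(|x-y|^{-1-b}-|x|^{-1-b}\bigr)g(y)\dy$, and then uses a scaling argument $x=\omega|y|$ to bound the inner integral by $C|y|^{2-b}$; there the restriction $b>1$ is what makes $\bigl||x-y|^{-1-b}-|x|^{-1-b}\bigr|$ integrable at infinity. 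Your approach is shorter and self-contained; the paper's buys a representation of the weighted Fourier remainder as an explicit convolution, which is the kind of structure one would want for higher-order asymptotic expansions. One small point worth flagging (it affects the statement, not your argument): with the paper's normalization \eqref{FTr} one has $\widehat g(0)=(2\pi)^{-3/2}\int_{\RRR}g(x)\dx$, so the quantity that must vanish at $\xi=0$ is $\widehat g(\xi)-(2\pi)^{-3/2}\beta$ rather than $\widehat g(\xi)-\beta$; you correctly track this constant, whereas the paper silently drops it.
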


\begin{proof}
The estimate is immediate in the case $b=1$. Indeed, since $\beta = \widehat g(0)$, by the mean value theorem and properties of the Fourier transform, we obtain 	
\begin{equation*}
	|\xi |^{-1}|\widehat g(\xi) - \widehat g(0)| \leqslant \| \nabla\widehat g \|_{\infty} \leqslant (2\pi)^{-\frac{3}{2}} \int_{\RRR} |x| |g(x)|\ud x.
\end{equation*}	
In a more general case, we use the well-known formula (see {\it e.g.}~\cite[Ch.V, Lemma 2]{Stein}) 
$$
\frac{1}{|\xi|^{2-b}} = C(b) \int_{\RRR} e^{-ix\cdot \xi} \frac{1}{|x|^{1+b}}\ud x
$$
valid for every $b\in (-1,2)$ and a constant $C(b)>0$.
Thus 
\begin{equation*}
\begin{split}
|\xi|^{b-2} \big|\widehat g(\xi)-\beta\big|&=
C(b)\left|
\int_{\RRR} e^{-ix\cdot \xi} \int_{\RRR} \left(
\frac{1}{|x-y|^{1+b}}-\frac{1}{|x|^{1+b}}
\right) g(y)\ud y \ud x
\right|\\
&\leqslant C(b)
\int_{\RRR} \int_{\RRR} \left|
\frac{1}{|x-y|^{1+b}}-\frac{1}{|x|^{1+b}}
\right| \ud x |g(y)|\ud y.
\end{split}
\end{equation*}
Note that the integral with respect to $x$ is finite for every $y\in\RRR$ because its integrand $\big| |x-y|^{-1-b}-|x|^{-1-b}\big|$ is locally integrable and behaves like $|x|^{-2-b}$ when $|x|\to \infty$ (here, the assumption $b\in (1,2)$ is crucial). Hence, by the change of variables $x=\omega |y|$, it follows that 
$$
\int_{\RRR} \left|
\frac{1}{|x-y|^{1+b}}-\frac{1}{|x|^{1+b}}
\right| \ud x
=
|y|^{2-b} 
\int_{\RRR} \left|
\frac{1}{|\omega-y/|y||^{1+b}}-\frac{1}{|\omega|^{1+b}}
\right| \ud \omega.
$$
Finally,  for $b\in (1,2)$,  we have got  
$
\sup_{y\in\RRR}\int_{\RRR} \left|
{|\omega-y/|y||^{-1-b}}-{|\omega|^{-1-b}}
\right| \ud \omega<\infty 
$
which completes the proof of the proposition.
\end{proof}

\subsection{Estimates of the heat semigroup}

First, we recall properties of the heat semigroup \rf{heat_semigroup} which satisfies the well-known formula
\begin{equation}\label{Fourier_heat}
\widehat{S(t)u_0}(\xi)=e^{-t|\xi|^2}\widehat u_0(\xi) .
\end{equation}

\begin{lemma}\label{lemm_heat_PM}
For each $\delta \geqslant 0$ and for every $u_0 \in \PM^2$ there exists a constant $C=C(\delta)>0$ such that the following estimate holds true
\begin{equation}\label{heat_PMa_est}
\| S(t)u_0\|_{\PM^{2+\delta}} \leqslant Ct^{-\frac{\delta}{2}}\| u_0\|_{\PM^2} \quad \textit{for all}\quad t>0.
\end{equation}
\end{lemma}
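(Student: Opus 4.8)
The statement is a direct consequence of the Fourier representation \eqref{Fourier_heat} of the heat semigroup together with an elementary one-variable estimate, so the proof is short and there is no serious obstacle. First I would write, using \eqref{Fourier_heat} and the definition \eqref{PM_space} of the $\PM^{a}$-norm,
\[
\|S(t)u_0\|_{\PM^{2+\delta}}
=\esssup_{\xi\in\RRR}|\xi|^{2+\delta}\,e^{-t|\xi|^2}\,|\widehat u_0(\xi)|
=\esssup_{\xi\in\RRR}\Big(|\xi|^{\delta}e^{-t|\xi|^2}\Big)\,|\xi|^{2}|\widehat u_0(\xi)|,
\]
and since $|\xi|^{2}|\widehat u_0(\xi)|\le\|u_0\|_{\PM^2}$ for a.e.\ $\xi$, this gives
\[
\|S(t)u_0\|_{\PM^{2+\delta}}\le
\Big(\sup_{\xi\in\RRR}|\xi|^{\delta}e^{-t|\xi|^2}\Big)\,\|u_0\|_{\PM^2}.
\]
In particular $\widehat{S(t)u_0}\in L^1_{loc}(\RRR)$ whenever $\widehat u_0$ is, so $S(t)u_0\in\PM^{2+\delta}$ is well defined.

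The second step is to estimate the scalar quantity $M(t):=\sup_{r\ge0}r^{\delta}e^{-tr^2}$. Substituting $r=s/\sqrt t$ yields $M(t)=t^{-\delta/2}\sup_{s\ge0}s^{\delta}e^{-s^2}$, and the supremum $\sup_{s\ge0}s^{\delta}e^{-s^2}=(\delta/2)^{\delta/2}e^{-\delta/2}=:C(\delta)$ is finite for every $\delta\ge0$ (with $C(0)=1$), as one sees by differentiating $s\mapsto s^{\delta}e^{-s^2}$ and finding the maximiser $s=\sqrt{\delta/2}$. Combining the two steps gives
\[
\|S(t)u_0\|_{\PM^{2+\delta}}\le C(\delta)\,t^{-\delta/2}\,\|u_0\|_{\PM^2}
\qquad\text{for all }t>0,
\]
which is \eqref{heat_PMa_est}. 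The only point worth a line of care is the interchange of the essential supremum over $\xi$ with the pointwise bound $|\xi|^2|\widehat u_0(\xi)|\le\|u_0\|_{\PM^2}$, which is immediate from the definition of the $\PM^2$-norm.
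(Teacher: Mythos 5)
Your proof is correct and follows essentially the same route as the paper: both pass to the Fourier side via \eqref{Fourier_heat}, peel off $|\xi|^2|\widehat u_0(\xi)|\leqslant\|u_0\|_{\PM^2}$, and reduce to the scalar bound $\sup_{r\geqslant 0}r^{\delta}e^{-tr^2}=C(\delta)t^{-\delta/2}$ by the scaling $r\mapsto r\sqrt{t}$. Your explicit identification of the constant $C(\delta)=(\delta/2)^{\delta/2}e^{-\delta/2}$ is a harmless extra detail the paper omits.
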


\begin{proof}
It follows from equation \rf{Fourier_heat} that
\begin{align*}
\| S(t)u_0\|_{\PM^{2+\delta}} &= \esssup_{\xi \in \RRR}|\xi|^{2+\delta} e^{-t|\xi|^2}|\widehat{u_0}(\xi)| \leqslant  \|u_0\|_{\PM^2}t^{-\frac{\delta}{2}}\esssup_{\xi \in \RRR}|\sqrt{t}\xi|^{\delta}e^{-|\sqrt{t}\xi|^2} \\
&= C(\delta)t^{-\frac{\delta}{2}}\|u_0\|_{\PM^2}
\end{align*}
since the function $|\sqrt{t}\xi|^{\delta}e^{-|\sqrt{t}\xi|^2}$ is bounded 
 for each $\delta\geqslant 0$
with respect to  $\xi\in \RRR$ and $t\geqslant 0$.
\end{proof}
\subsection{Estimates of bilinear forms}\label{Est_of_bili_form}

Next, we gather estimates of the bilinear forms $B(\cdot,\cdot)$ from \rf{bili_form} and $B_E(\cdot, \cdot)$ defined in \rf{bili_form_E}, where both quantities should be defined via the Fourier transform as follows. For time dependent tempered distributions $u(\cdot, t)$ and $v(\cdot,t)$ we put
\begin{equation}\label{F_tran_B}
  \widehat{B(u,v)}(\xi,t)=\int_{0}^{t}e^{-(t-\tau)|\xi|^{2}}i\xi \widehat\Projector (\xi)\int_{\RRR}\widehat{u}(\eta,\tau)\otimes \widehat{v}(\xi-\eta,\tau)\ud  \eta\ud \tau.
\end{equation}
and for time independent $w$ and $z$, we set
\begin{equation}\label{F_tran_BE}
\begin{split}
  \widehat{B_{E}(w,z)}(\xi)&=\int_{-\infty}^{t}e^{-(t-\tau)|\xi|^{2}}i\xi \widehat{\Projector}(\xi)\int_{\RRR}\widehat{w}(\eta)\otimes\widehat{z}(\xi-\eta)\ud  \eta\ud \tau \\
&= \frac{i\xi}{|\xi|^2} \widehat{\Projector}(\xi)\int_{\RRR}\widehat{w}(\eta)\otimes\widehat{z}(\xi-\eta)\ud  \eta,
\end{split}
\end{equation}
where we use the simple formula 
$$
\int_{-\infty}^t e^{-(t-s)|\xi|^2}\ud s =\frac{1}{|\xi|^2}\qquad \text{for each} \quad 
\xi \in\RRR\setminus \{0\}
$$ 
to obtain the second equality in \rf{F_tran_BE}. Below, we estimate both bilinear forms in the norms of  $\PM^a$-spaces using the equation
(see {\it e.g} \cite[Ch.~V, Sec.~1]{Stein})
\begin{equation}\label{int:C(b)}
\int_{\RRR} \frac{1}{|\eta|^2}\frac{1}{|\xi -\eta|^b} \ud \eta = \frac{C(b)}{|\xi |^{b-1}}\qquad \text{for all}\quad \xi \in \RRR\setminus \{0\}
\end{equation}
which holds true for every $b\in (1,3)$ and a constant $C(b)>0$ independent of $\xi$.

\begin{lemma}\label{est_of_bil_forms}
There exist constants $\eta >0$ and $\eta_E >0$ such that for all $u, v \in \mathcal{X}^2$ and $w,z \in \PM^2$
\begin{equation}\label{est_B_form}
\| B(u,v)\|_{\mathcal{X}^2}\leqslant \eta \| u\|_{\mathcal{X}^2}\| v\|_{\mathcal{X}^2}
\end{equation}
and
\begin{equation}\label{est_BE_form}
\| B_E(w,z)\|_{\PM^2}\leqslant \eta_E \| w\|_{\PM^2}\| z\|_{\PM^2}
\end{equation}
\end{lemma}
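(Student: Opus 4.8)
The plan is to estimate both bilinear forms directly on the Fourier side, using the explicit representations \eqref{F_tran_B} and \eqref{F_tran_BE} together with the convolution identity \eqref{int:C(b)}. Since both proofs follow the same pattern, I would do the stationary case \eqref{est_BE_form} first because it is cleaner (no time integral), and then indicate the modifications for the time-dependent case \eqref{est_B_form}.

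\textbf{Stationary estimate.} Fix $w,z\in\PM^2$ and $\xi\in\RRR\setminus\{0\}$. Starting from the second line of \eqref{F_tran_BE}, I would bound
\[
|\xi|^2\,\big|\widehat{B_E(w,z)}(\xi)\big|\leqslant |\xi|\int_{\RRR}\big|\widehat w(\eta)\big|\,\big|\widehat z(\xi-\eta)\big|\ud\eta,
\]
using $|\widehat\Projector(\xi)|\leqslant 1$ and $|i\xi/|\xi|^2|=1/|\xi|$. Then replace $|\widehat w(\eta)|\leqslant\|w\|_{\PM^2}|\eta|^{-2}$ and $|\widehat z(\xi-\eta)|\leqslant\|z\|_{\PM^2}|\xi-\eta|^{-2}$, which gives
\[
|\xi|^2\,\big|\widehat{B_E(w,z)}(\xi)\big|\leqslant |\xi|\,\|w\|_{\PM^2}\|z\|_{\PM^2}\int_{\RRR}\frac{1}{|\eta|^2}\frac{1}{|\xi-\eta|^2}\ud\eta.
\]
Here I would apply \eqref{int:C(b)} with $b=2\in(1,3)$, obtaining $\int_{\RRR}|\eta|^{-2}|\xi-\eta|^{-2}\ud\eta=C(2)|\xi|^{-1}$, so the right-hand side equals $C(2)\|w\|_{\PM^2}\|z\|_{\PM^2}$. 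Taking the essential supremum over $\xi$ yields \eqref{est_BE_form} with $\eta_E=C(2)$.

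\textbf{Time-dependent estimate.} For \eqref{est_B_form}, I start from \eqref{F_tran_B} and bound $|e^{-(t-\tau)|\xi|^2}|\leqslant 1$ together with $|i\xi\widehat\Projector(\xi)|\leqslant|\xi|$, but it is more efficient to keep the factor $e^{-(t-\tau)|\xi|^2}$ and instead estimate the inner convolution: for each fixed $\tau$,
\[
\int_{\RRR}\big|\widehat u(\eta,\tau)\big|\,\big|\widehat v(\xi-\eta,\tau)\big|\ud\eta\leqslant \|u\|_{\X^2}\|v\|_{\X^2}\int_{\RRR}\frac{1}{|\eta|^2}\frac{1}{|\xi-\eta|^2}\ud\eta=C(2)\,\|u\|_{\X^2}\|v\|_{\X^2}\,|\xi|^{-1},
\]
again by \eqref{int:C(b)}. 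Hence
\[
|\xi|^2\,\big|\widehat{B(u,v)}(\xi,t)\big|\leqslant C(2)\,\|u\|_{\X^2}\|v\|_{\X^2}\int_0^t |\xi|\,e^{-(t-\tau)|\xi|^2}\ud\tau=C(2)\,\|u\|_{\X^2}\|v\|_{\X^2}\,\frac{1-e^{-t|\xi|^2}}{|\xi|}\cdot|\xi|,
\]
and since $\int_0^t|\xi|^2 e^{-(t-\tau)|\xi|^2}\ud\tau=1-e^{-t|\xi|^2}\leqslant 1$, we get $|\xi|^2|\widehat{B(u,v)}(\xi,t)|\leqslant C(2)\|u\|_{\X^2}\|v\|_{\X^2}$ uniformly in $t>0$ and $\xi\neq 0$; taking suprema gives \eqref{est_B_form} with $\eta=C(2)$. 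One should also check that $B(u,v)(\cdot,t)$ is weakly continuous in $t$ so that it indeed lies in $\X^2$; this follows from dominated convergence applied to $t\mapsto\langle\widehat{B(u,v)}(\cdot,t),\widehat\vf\rangle$ using the uniform pointwise bound just obtained, together with continuity of the integrand in $t$.

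\textbf{Main obstacle.} The only genuinely delicate point is the convergence of the singular convolution integral $\int_{\RRR}|\eta|^{-2}|\xi-\eta|^{-2}\ud\eta$ near $\eta=0$, $\eta=\xi$, and at infinity, but this is exactly guaranteed by the hypothesis $b=2\in(1,3)$ in \eqref{int:C(b)} (for $b\leqslant 1$ the integral diverges at infinity, for $b\geqslant 3$ at the origin), so it is handled by quoting that identity. Beyond that, the argument is a routine Fourier-side computation, and the verification of weak continuity in time for $B(u,v)$ is the mild bookkeeping required to conclude membership in $\X^2$ rather than merely a pointwise bound.
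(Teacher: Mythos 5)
Your proof is correct and is exactly the paper's argument: both bounds are read off on the Fourier side from the representations \eqref{F_tran_B}--\eqref{F_tran_BE} together with the convolution identity \eqref{int:C(b)} applied with $b=2$, and the paper's own proof consists of precisely this remark (with the detailed computation deferred to the proof of Lemma \ref{lemma_est_B_PMb}). The only blemish is a harmless bookkeeping slip in your displayed chain for the time-dependent case (the integrand should carry $|\xi|^2$ rather than $|\xi|$), which you immediately correct in the following sentence, and your added check of weak continuity in $t$ is a reasonable extra detail the paper leaves implicit.
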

\begin{proof}
Both inequalities are an immediate consequence of equation \eqref{int:C(b)} with $b=2$ and a slightly more general   reasoning is used in the next  proof below.
\end{proof}

\begin{lemma}\label{lemma_est_B_PMb}
For each $b \in \left[0,2\right]$ there exists a constant $C=C(b)>0$ such that the following inequality holds true for all $u\in \mathcal{X}^2$ 
\begin{equation}\label{est_B_PMb}
\| B(u,u)(t)\|_{\PM^b}\leqslant Ct^{\frac{2-b}{2}}\| u\|_{\mathcal{X}^2}^2 \qquad \textit{for all}\quad t>0 .
\end{equation}
\end{lemma}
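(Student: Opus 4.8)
The plan is to work entirely on the Fourier side, using the representation \eqref{F_tran_B} of $\widehat{B(u,u)}(\xi,t)$ together with the convolution estimate \eqref{int:C(b)}. First I would bound the inner convolution integral: since $u\in\mathcal X^2$ means $|\widehat u(\eta,\tau)|\le \|u\|_{\mathcal X^2}|\eta|^{-2}$ uniformly in $\tau$, and likewise $|\widehat u(\xi-\eta,\tau)|\le \|u\|_{\mathcal X^2}|\xi-\eta|^{-2}$, the integral $\int_{\RRR}|\widehat u(\eta,\tau)||\widehat u(\xi-\eta,\tau)|\ud\eta$ is controlled by $\|u\|_{\mathcal X^2}^2\int_{\RRR}|\eta|^{-2}|\xi-\eta|^{-2}\ud\eta$. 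This last integral diverges ($b=2$ is borderline in \eqref{int:C(b)}, which requires $b\in(1,3)$), so the honest approach is to split the convolution into the region $|\eta|\le|\xi|/2$ and $|\eta|\ge|\xi|/2$ (equivalently $|\xi-\eta|\ge|\xi|/2$), bounding one factor pointwise by $(2/|\xi|)^2$ and integrating the other; this is exactly the standard computation behind \eqref{int:C(b)} and gives the inner integral bounded by $C\|u\|_{\mathcal X^2}^2\,|\xi|^{-1}$.

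Next I would insert this into \eqref{F_tran_B}. Using $|i\xi\,\widehat\Projector(\xi)|\le|\xi|$ (the Leray projector symbol is bounded by $1$ entrywise), we get
\begin{equation*}
|\xi|^b\,\big|\widehat{B(u,u)}(\xi,t)\big|
\le C\|u\|_{\mathcal X^2}^2\,|\xi|^{b}\cdot|\xi|\cdot|\xi|^{-1}\int_0^t e^{-(t-\tau)|\xi|^2}\ud\tau
= C\|u\|_{\mathcal X^2}^2\,|\xi|^{b}\,\frac{1-e^{-t|\xi|^2}}{|\xi|^2}.
\end{equation*}
Thus $\|B(u,u)(t)\|_{\PM^b}\le C\|u\|_{\mathcal X^2}^2\,\sup_{\xi}\,|\xi|^{b-2}(1-e^{-t|\xi|^2})$. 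The remaining task is to show $\sup_{\xi\in\RRR}|\xi|^{b-2}(1-e^{-t|\xi|^2})\le Ct^{(2-b)/2}$ for $b\in[0,2]$. Substituting $s=\sqrt t\,|\xi|$ gives $|\xi|^{b-2}(1-e^{-t|\xi|^2})=t^{(2-b)/2}\,s^{b-2}(1-e^{-s^2})$, so it suffices that $g(s):=s^{b-2}(1-e^{-s^2})$ is bounded on $(0,\infty)$: as $s\to0$ one has $1-e^{-s^2}\sim s^2$ so $g(s)\sim s^b\to0$ (using $b\ge0$), and as $s\to\infty$ one has $g(s)\le s^{b-2}\to0$ (using $b\le2$), hence $g$ is continuous and bounded. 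This yields \eqref{est_B_PMb}.

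The main obstacle is purely the borderline nature of the convolution exponent at $b=2$: the formula \eqref{int:C(b)} is stated only for $b\in(1,3)$, so for the factor $|\eta|^{-2}$ against $|\xi-\eta|^{-2}$ one cannot invoke it directly and must instead run the dyadic splitting argument by hand (or quote the version of it already used in Lemma~\ref{est_of_bil_forms} and Lemma~\ref{lemm_heat_PM}). Everything else — the bound on the projector symbol, the time integral $\int_0^t e^{-(t-\tau)|\xi|^2}\ud\tau=(1-e^{-t|\xi|^2})/|\xi|^2$, and the elementary scaling estimate for $g(s)$ — is routine. One should also note that the case $b=2$ of \eqref{est_B_PMb} recovers the time-independent bound \eqref{est_B_form} (with the $t^0$ power), which is the consistency check that the constants behave correctly.
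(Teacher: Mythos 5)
Your overall strategy is exactly the paper's: bound the convolution of $\widehat u$ with itself by $C\|u\|_{\mathcal{X}^2}^2|\xi|^{-1}$, use $|\widehat{\Projector}(\xi)|\leqslant 1$, compute $\int_0^t e^{-(t-\tau)|\xi|^2}\ud\tau=(1-e^{-t|\xi|^2})/|\xi|^2$, and finish with the scaling bound $\sup_{\xi}|\xi|^{b-2}(1-e^{-t|\xi|^2})=Ct^{(2-b)/2}$ for $b\in[0,2]$. All of those steps are correct and coincide with the paper's proof.

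The one genuine error is your claim that $\int_{\RRR}|\eta|^{-2}|\xi-\eta|^{-2}\ud\eta$ diverges and that the exponent $2$ is ``borderline'' for \eqref{int:C(b)}. The exponent of the second factor here is $2$, which lies in the \emph{interior} of the admissible range $(1,3)$: in $\RRR$ the integrand is locally integrable near $\eta=0$ and near $\eta=\xi$ (exponent $2<3$) and decays like $|\eta|^{-4}$ at infinity (exponent $4>3$), so the integral converges and equals $C|\xi|^{-1}$ by homogeneity. The paper simply invokes \eqref{int:C(b)} with $b=2$ at this step, exactly as Lemma~\ref{est_of_bil_forms} does. Moreover, the two-region substitute you propose does not work as written: on the region $|\eta|\geqslant|\xi|/2$ you bound $|\eta|^{-2}$ by $(2/|\xi|)^2$ and are then left with $\int_{|\eta|\geqslant|\xi|/2}|\xi-\eta|^{-2}\ud\eta$, which is an integral of a function decaying only like $|\eta|^{-2}$ over an unbounded subset of $\RRR$ and hence diverges; a correct by-hand proof needs a third region of large $|\eta|$ where the combined decay $|\eta|^{-2}|\xi-\eta|^{-2}\sim|\eta|^{-4}$ is used. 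Deleting this detour and citing \eqref{int:C(b)} with exponent $2$ turns your argument into the paper's proof.
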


\begin{proof}
By equation \eqref{int:C(b)}, we have got the following estimate
\begin{align}\label{ksi_u_est}
  \left|\int_{\RRR}\widehat{u}(\eta,\tau)\otimes \widehat{u}(\xi-\eta,\tau)\ud  \eta\right| \leqslant \|u\|_{\XPM^{2}}^{2}\int_{\RRR}\frac{1}{|\eta|^{2}}\frac{1}{|\xi-\eta|^{2}}\ud  \eta  = \frac{C}{|\xi|}\|u\|_{\XPM^{2}}^{2}
\end{align}
which by formula \rf{F_tran_B} implies 
\begin{align*}
  \| B(u,u)(t)\|_{\PM^{b}}&\leqslant  C\|u\|_{\XPM^{2}}^{2}\sup_{\xi\in\RRR}|\xi|^{b}\int_{0}^{t}e^{-(t-\tau)|\xi|^{2}}\ud \tau\\
  &= C\|u\|_{\XPM^{2}}^{2}\sup_{\xi\in\RRR}\frac{1-e^{-t|\xi|^{2}}}{|\xi|^{2-b}}
  = Ct^{\frac{2-b}{2}}\|u\|_{\XPM^{2}}^{2}
\end{align*}
for all $t>0$.
\end{proof}

\begin{lemma}\label{lemma_est_BE_form}
For each $b \in \left(1,3\right)$ there exists a constant $C=C(b)>0$ such that for all $w\in \PM^2$ and $z \in \PM^b$ we have got the inequality 
\begin{equation}\label{est_BE_Yb}
\| B_E(w,z)\|_{\PM^b}\leqslant C(b) \| w\|_{\PM^2}\| z\|_{\PM^b}.
\end{equation}
\end{lemma}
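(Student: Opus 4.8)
The plan is to prove \eqref{est_BE_Yb} by a direct pointwise estimate on the Fourier side, exactly as in Lemma~\ref{est_of_bil_forms} but now tracking a general exponent $b$ in place of $2$. I would start from the closed formula for $\widehat{B_E(w,z)}$ in the second line of \eqref{F_tran_BE},
\[
\widehat{B_E(w,z)}(\xi)=\frac{i\xi}{|\xi|^2}\,\widehat{\Projector}(\xi)\int_{\RRR}\widehat{w}(\eta)\otimes\widehat{z}(\xi-\eta)\ud\eta ,
\]
and use that every entry of the matrix symbol satisfies $|(\widehat{\Projector}(\xi))_{j,k}|\leqslant 1$, so that componentwise
\[
|\xi|^{b}\,|\widehat{B_E(w,z)}(\xi)|\leqslant C\,|\xi|^{\,b-1}\int_{\RRR}|\widehat{w}(\eta)|\,|\widehat{z}(\xi-\eta)|\ud\eta
\qquad\text{for }\xi\in\RRR\setminus\{0\},
\]
with $C$ a harmless numerical constant coming from the number of tensor/matrix components.

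Next I would insert the pointwise bounds built into the $\PM^a$-norms from \eqref{PM_space}, namely $|\widehat{w}(\eta)|\leqslant\|w\|_{\PM^2}\,|\eta|^{-2}$ and $|\widehat{z}(\xi-\eta)|\leqslant\|z\|_{\PM^b}\,|\xi-\eta|^{-b}$, which reduces the right-hand side to $C\,\|w\|_{\PM^2}\|z\|_{\PM^b}\,|\xi|^{\,b-1}\int_{\RRR}|\eta|^{-2}|\xi-\eta|^{-b}\ud\eta$. The convolution integral here is precisely the one evaluated in \eqref{int:C(b)}, valid exactly for $b\in(1,3)$, and equals $C(b)\,|\xi|^{-(b-1)}$; the factors $|\xi|^{\,b-1}$ and $|\xi|^{-(b-1)}$ cancel, leaving $|\xi|^{b}\,|\widehat{B_E(w,z)}(\xi)|\leqslant C(b)\,\|w\|_{\PM^2}\|z\|_{\PM^b}$ for almost every $\xi\in\RRR$. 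Taking the essential supremum over $\xi$ gives \eqref{est_BE_Yb}. The same chain of inequalities also shows that $|\widehat{B_E(w,z)}(\xi)|$ is dominated by $C(b)\,\|w\|_{\PM^2}\|z\|_{\PM^b}\,|\xi|^{-b}$, which is locally integrable on $\RRR$ because $b<3$; hence $\widehat{B_E(w,z)}\in L^1_{loc}(\RRR)$ and $B_E(w,z)$ genuinely defines an element of $\PM^b$.

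I do not expect any real obstacle: the whole statement reduces to the single scalar identity \eqref{int:C(b)}, and the hypothesis $b\in(1,3)$ is dictated entirely by that identity --- the convolution $|\eta|^{-2}\ast|\eta|^{-b}$ diverges near $\eta=\xi$ unless $b<3$ and near spatial infinity unless $b>1$. The only thing to keep straight is the bookkeeping of the tensor product $\widehat{w}\otimes\widehat{z}$ against the matrix symbol $\widehat{\Projector}(\xi)$, but since all of its entries are bounded by $1$ this only changes the value of the constant $C(b)$ and not the structure of the argument.
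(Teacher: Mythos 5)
Your proposal is correct and follows essentially the same route as the paper: bound the convolution integrand by $\|w\|_{\PM^2}\|z\|_{\PM^b}|\eta|^{-2}|\xi-\eta|^{-b}$, evaluate it via the identity \eqref{int:C(b)} (which is exactly what forces $b\in(1,3)$), and combine with the explicit symbol $i\xi|\xi|^{-2}\widehat{\Projector}(\xi)$ from \eqref{F_tran_BE}. The only addition beyond the paper's argument is your remark that the resulting bound $|\widehat{B_E(w,z)}(\xi)|\leqslant C(b)\|w\|_{\PM^2}\|z\|_{\PM^b}|\xi|^{-b}$ is locally integrable, which correctly confirms membership in $\PM^b$.
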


\begin{proof}
By equation \rf{int:C(b)}, we obtain the inequality
\begin{align*}
  \left|\int_{\RRR}\widehat{w}(\eta,\tau)\otimes\widehat{z}(\xi-\eta,\tau)\ud  \eta\right| &\leqslant \| w\|_{\PM^{2}}\|z\|_{\PM^{b}}\int_{\RRR}\frac{1}{|\eta|^{2}}\frac{1}{|\xi-\eta|^{b}}\ud  \eta\\
&=\frac{C}{|\xi|^{b-1}}\| w\|_{\PM^{2}}\| z\|_{\PM^{b}}
\end{align*}
which, by the formula on the right hand side of 
equation \rf{F_tran_BE}, leads to inequality~\rf{est_BE_Yb}.~\end{proof}

\begin{lemma}\label{lemma_2delta_norm}
For each $\delta \in [0,1)$ there exists a constant $C=C(\delta )>0$ such that for all $u\in \mathcal{X}^2$ and $v(t) \in \PM^{2+\delta}$ such that $\sup_{t>0} t^{\frac{\delta}{2}}\| v(t)\|_{\PM^{2+\delta}}<\infty$, we have got 
\begin{equation*}
\sup_{t>0}t^{\frac{\delta}{2}}\| B(u,v)\|_{\PM^{2+\delta}}\leqslant C \| u\|_{\mathcal{X}^2}\sup_{t>0}t^{\frac{\delta}{2}}\| v(t)\|_{\PM^{2+\delta}}.
\end{equation*}
\end{lemma}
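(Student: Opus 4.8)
The plan is to argue on the Fourier side, mirroring the structure of Lemmas~\ref{lemma_est_B_PMb} and~\ref{lemma_est_BE_form}, starting from the representation~\eqref{F_tran_B} of $\widehat{B(u,v)}(\xi,t)$. Set $M\equiv\sup_{t>0}t^{\delta/2}\|v(t)\|_{\PM^{2+\delta}}$. Using $|(\widehat{\Projector}(\xi))_{j,k}|\le 1$ together with the pointwise bounds $|\widehat u(\eta,\tau)|\le\|u\|_{\mathcal{X}^2}|\eta|^{-2}$ and $|\widehat v(\xi-\eta,\tau)|\le M\tau^{-\delta/2}|\xi-\eta|^{-2-\delta}$, I would bound the inner convolution by
$$
\left|\int_{\RRR}\widehat u(\eta,\tau)\otimes\widehat v(\xi-\eta,\tau)\,\ud\eta\right|
\le\|u\|_{\mathcal{X}^2}\,M\,\tau^{-\delta/2}\int_{\RRR}\frac{1}{|\eta|^2}\frac{1}{|\xi-\eta|^{2+\delta}}\,\ud\eta .
$$
Since $\delta\in[0,1)$ the exponent $b=2+\delta$ lies in $(1,3)$, so formula~\eqref{int:C(b)} applies and the last integral equals $C(2+\delta)|\xi|^{-(1+\delta)}$. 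Inserting this into~\eqref{F_tran_B} and multiplying by $|\xi|^{2+\delta}$ gives, for all $t>0$,
$$
|\xi|^{2+\delta}\big|\widehat{B(u,v)}(\xi,t)\big|\le C\,\|u\|_{\mathcal{X}^2}\,M\,|\xi|^2\int_0^t e^{-(t-\tau)|\xi|^2}\tau^{-\delta/2}\,\ud\tau .
$$

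The crux of the argument is then the scalar estimate
$$
\sup_{\xi\in\RRR}\ |\xi|^2\int_0^t e^{-(t-\tau)|\xi|^2}\tau^{-\delta/2}\,\ud\tau\le C(\delta)\,t^{-\delta/2}\qquad\text{for all }t>0 .
$$
I would prove this by the rescaling $\tau=\sigma t$, which turns the left-hand side into $t^{-\delta/2}$ times $z\int_0^1 e^{-z(1-\sigma)}\sigma^{-\delta/2}\,\ud\sigma$ with $z=t|\xi|^2$; it therefore suffices to check that $\sup_{z>0}z\int_0^1 e^{-z(1-\sigma)}\sigma^{-\delta/2}\,\ud\sigma<\infty$. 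For $z\le1$ one simply discards the exponential and uses $\int_0^1\sigma^{-\delta/2}\,\ud\sigma=\tfrac{2}{2-\delta}<\infty$; for $z\ge1$ one substitutes $w=1-\sigma$, splits $[0,1]=[0,\tfrac12]\cup[\tfrac12,1]$, bounds $(1-w)^{-\delta/2}\le2$ and $z\int_0^{1/2}e^{-zw}\,\ud w\le1$ on the first piece, and uses $e^{-zw}\le e^{-z/2}$, the boundedness of $ze^{-z/2}$, and the integrability of $(1-w)^{-\delta/2}$ near $w=1$ on the second. Note that only $\delta<2$ is used in this step; the sharper restriction $\delta<1$ in the statement is forced solely by the requirement $b\in(1,3)$ in~\eqref{int:C(b)}.

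Combining the two displays, taking $\sup_{\xi\in\RRR}$ and then $\sup_{t>0}$, yields
$$
\sup_{t>0}t^{\delta/2}\|B(u,v)(t)\|_{\PM^{2+\delta}}\le C(\delta)\,\|u\|_{\mathcal{X}^2}\,M ,
$$
which is exactly the claimed inequality. I do not expect any real difficulty here: this is a routine variant of Lemma~\ref{lemma_est_B_PMb} (which is essentially the endpoint $\delta=0$ read off at a fixed $b$), the only genuinely new ingredient being the uniform-in-$\xi$ control of the time convolution against the weight $\tau^{-\delta/2}$, i.e.\ the Beta-type integral above.
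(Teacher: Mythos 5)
Your proposal is correct and follows essentially the same route as the paper: the convolution is bounded via \eqref{int:C(b)} with $b=2+\delta$, and the problem is reduced to the uniform bound $\sup_{\xi,t}\,t^{\delta/2}|\xi|^2\int_0^t e^{-(t-\tau)|\xi|^2}\tau^{-\delta/2}\,\ud\tau<\infty$, which both you and the paper establish by splitting the time integral at $t/2$ (your rescaling $\tau=\sigma t$ is just a cosmetic repackaging of the paper's direct estimates of $I_1$ and $I_2$). Your remark that only the requirement $b=2+\delta\in(1,3)$ forces $\delta<1$ is a correct observation consistent with the paper's argument.
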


\begin{proof}
By equation \eqref{int:C(b)} with $b=2+\delta$, we have got the following estimate
\begin{align*}
  \left|\int_{\RRR}\widehat{u}(\eta,\tau)\otimes \widehat{v}(\xi-\eta,\tau)\ud  \eta\right| &\leqslant \|u(\tau )\|_{\PM^{2}}\|v(\tau )\|_{\PM^{2+\delta}}\int_{\RRR}\frac{1}{|\eta|^{2}}\frac{1}{|\xi-\eta|^{2+\delta}}\ud  \eta  \\
&\leqslant \frac{C}{|\xi|^{1+\delta}}\|u(\tau)\|_{\PM^2}\|v(\tau)\|_{\PM^{2+\delta}}
\end{align*}
which, by formula \rf{F_tran_B}, implies 
\begin{align*}
  t^{\frac{\delta}{2}}\| B(u,v)\|_{\PM^{2+\delta}}&\leqslant  Ct^{\frac{\delta}{2}}\|u\|_{\XPM^{2}}\sup_{t>0}t^{\frac{\delta}{2}}\|v(t)\|_{\PM^{2+\delta}}\sup_{\xi\in\RRR}\int_{0}^{t}|\xi|^2e^{-(t-\tau)|\xi|^{2}}\tau^{-\frac{\delta}{2}}\ud \tau
\end{align*}
for all $t>0$. To complete the proof, we show that the quantity 
\begin{equation*}
t^{\frac{\delta}{2}}\int_{0}^{t}|\xi|^2e^{-(t-\tau)|\xi|^{2}}\tau^{-\frac{\delta}{2}}\ud \tau
\end{equation*}
is bounded by a constant independent of $\xi$ and $t$. Indeed, we split the integral  with respect to $\tau$ into two parts and we begin with 
\begin{align*}
I_1 &= t^{\frac{\delta}{2}}|\xi|^2 \int_0^{\frac{t}{2}}e^{-(t-\tau)|\xi|^{2}}\tau^{-\frac{\delta}{2}}\ud \tau \leqslant t^{\frac{\delta}{2}}|\xi|^2e^{-\frac{t|\xi|^2}{2}}\int_0^{\frac{t}{2}}\tau^{-\frac{\delta}{2}}\ud \tau
= C(\delta)\left|\sqrt{t}\xi\right|^2e^{-\frac{|\sqrt{t}\xi|^2}{2}},
\end{align*}
where  the function on the right-hand side is bounded for $\xi \in \RRR$ and $t>0$. 

Next, we deal with 
\begin{align*}
I_2 &= t^{\frac{\delta}{2}} \int_{\frac{t}{2}}^t|\xi|^2 e^{-(t-\tau)|\xi|^{2}}\tau^{-\frac{\delta}{2}}\ud \tau \leqslant C(\delta) \int_{\frac{t}{2}}^t  |\xi|^2 e^{-(t-\tau)|\xi|^2}\ud \tau = C(\delta) \left(1-e^{-\frac{t|\xi|^2}{2}}\right) \leqslant C(\delta)
\end{align*}
which completes the proof.
\end{proof}

\subsection{Estimates of external forces}

Next, we are going to estimated the quantities which depend on external forces: the term $F=F(x,t)$ defined in \eqref{force} and 
the term $G=G(x,t)$ given by formula \eqref{F_E}. Here, again, we define these formulas by the Fourier transform as follows
\begin{equation}\label{F_trans_F}
\widehat{F}(\xi, t) = \int_0^t e^{-(t-\tau)|\xi|^2}\widehat{\Projector}(\xi)\widehat{f}(\xi, \tau) \ud \tau
\end{equation}
and
\begin{equation}\label{F_trans_G}
\widehat{G}(\xi ) = \int_{-\infty}^t e^{-(t-\tau)|\xi|^2}\widehat{\Projector}(\xi)\widehat{g}(\xi ) \ud \tau =\frac{1}{|\xi|^2}\widehat{\Projector }(\xi)\widehat{g}(\xi).
\end{equation}

\begin{lemma}\label{lemma_est_F}
For each $b\in \left[0,2\right]$ there exists a constant $C=C(b)>0$ such that for each $f\in \mathcal{X}^0$ the following inequality holds true
\begin{equation}\label{est_F}
\| F(t)\|_{\PM^b}\leqslant C(b)t^{\frac{2-b}{2}}\|f\|_{\mathcal{X}^0}\qquad \textit{for all}\quad t>0.
\end{equation}
\end{lemma}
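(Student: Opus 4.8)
The plan is to argue entirely on the Fourier side, using the representation \eqref{F_trans_F}, and to reduce everything to boundedness of a single scalar function, in complete analogy with the proofs of Lemma~\ref{lemm_heat_PM} and Lemma~\ref{lemma_est_B_PMb}. First I would bound the integrand of \eqref{F_trans_F} pointwise: the Leray projector satisfies $|(\widehat{\Projector}(\xi))_{j,k}|\leqslant 1$ for every $\xi\in\RRR\setminus\{0\}$, and by definition of the norm in $\mathcal{X}^0$ we have $|\widehat f(\xi,\tau)|\leqslant \|f(\tau)\|_{\PM^0}\leqslant \|f\|_{\mathcal{X}^0}$ for a.e.\ $\xi$ and every $\tau\in(0,t)$. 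Inserting these two bounds into \eqref{F_trans_F} and multiplying by $|\xi|^b$ yields
\[
|\xi|^{b}\,|\widehat F(\xi,t)|\;\leqslant\;\|f\|_{\mathcal{X}^0}\,|\xi|^{b}\int_0^t e^{-(t-\tau)|\xi|^2}\ud\tau
\;=\;\|f\|_{\mathcal{X}^0}\,\frac{1-e^{-t|\xi|^2}}{|\xi|^{2-b}}
\]
for all $\xi\in\RRR\setminus\{0\}$ and $t>0$, which is exactly the quantity already met after \eqref{ksi_u_est} in the proof of Lemma~\ref{lemma_est_B_PMb}.

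The only remaining point is to check that $\sup_{\xi\in\RRR}\dfrac{1-e^{-t|\xi|^2}}{|\xi|^{2-b}}\leqslant C(b)\,t^{\frac{2-b}{2}}$. Here I would substitute $s=\sqrt{t}\,|\xi|$, which rewrites the supremum as $t^{\frac{2-b}{2}}\sup_{s>0}\dfrac{1-e^{-s^2}}{s^{2-b}}$, and then observe that the function $s\mapsto (1-e^{-s^2})/s^{2-b}$ is bounded on $(0,\infty)$ for every $b\in[0,2]$: near $s=0$ it behaves like $s^{b}$ (bounded since $b\geqslant 0$), as $s\to\infty$ it behaves like $s^{b-2}$ (bounded since $b\leqslant 2$), and it is continuous on the interior. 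Taking $C(b)$ to be this finite supremum and recalling $\|F(t)\|_{\PM^b}=\esssup_{\xi\in\RRR}|\xi|^b|\widehat F(\xi,t)|$ gives inequality \eqref{est_F}.

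I do not expect a genuine obstacle: the estimate follows directly from \eqref{F_trans_F}, the uniform bound on $\widehat{\Projector}$, and the definition of $\mathcal{X}^0$, by the same mechanism as in the heat-semigroup and bilinear-form lemmas above. The only place where a little care is warranted is the two endpoint cases $b=0$ and $b=2$ of the scalar supremum, where the integrand $(1-e^{-s^2})/s^{2-b}$ converges to a nonzero constant at one of the endpoints rather than vanishing; both remain bounded, so the argument goes through uniformly in $b\in[0,2]$.
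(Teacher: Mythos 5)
Your proposal is correct and follows essentially the same route as the paper: bound $|\widehat{\Projector}(\xi)|$ and $|\widehat f(\xi,\tau)|$ pointwise, integrate the exponential in $\tau$, and reduce to the scalar supremum $\sup_{\xi}\frac{1-e^{-t|\xi|^2}}{|\xi|^{2-b}}=C(b)t^{\frac{2-b}{2}}$. Your explicit verification of that supremum via the substitution $s=\sqrt{t}\,|\xi|$, including the endpoint cases $b=0$ and $b=2$, is a detail the paper leaves implicit but is exactly the intended argument.
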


\begin{proof}
The reasoning is analogous to that  in the proof of 
Lemma \ref{lemma_est_B_PMb}. Using the definition of $F(t)$ in \rf{F_trans_F} and of the norm in $\PM^b$ in \rf{PM_space} we obtain
\begin{align*}
  \|F(t)\|_{\PM^b}&\leqslant \esssup_{\xi \in \RRR} |\xi|^b\int_0^t e^{-(t-\tau)|\xi|^2}|\widehat{\Projector}(\xi)| |\widehat{f}(\xi,\tau)| \ud \tau \\
&\leqslant 2 \|f\|_{\mathcal{X}^0}\sup_{\xi \in \RRR}\frac{1-e^{-t|\xi|^2}}{|\xi|^{2-b}}= C(b)t^{\frac{2-b}{2}}\|f\|_{\mathcal{X}^0}.
\end{align*}
\end{proof}

\begin{corollary}\label{cor:F:Lq}
For each $q\in \left[2,3\right)$ there exists a constant $C>0$ such that for each $f\in \mathcal{X}^0$ the following inequality holds true
\begin{equation}\label{F_lq}
	\| F(t)\|_{L^q}\leqslant Ct^{\frac{3-q}{2q}}\|f\|_{\mathcal{X}^0}\qquad \textit{for all}\quad t>0.
\end{equation}
\end{corollary}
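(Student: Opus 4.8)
The idea is to combine the two endpoint estimates furnished by Lemma \ref{lemma_est_F} with the interpolation embedding of Lemma \ref{lemEmbedding}, exactly as the more general estimate \eqref{uSF:Lq} in Theorem \ref{corNSLandau} is obtained from \eqref{uSF:PM}. First I would apply Lemma \ref{lemma_est_F} with $b=0$ and with $b=2$ to get, for each $f\in\mathcal X^0$ and all $t>0$,
\begin{equation*}
\|F(t)\|_{\PM^0}\leqslant C t\,\|f\|_{\mathcal X^0}
\qquad\text{and}\qquad
\|F(t)\|_{\PM^2}\leqslant C\,\|f\|_{\mathcal X^0}.
\end{equation*}
In particular $F(t)\in\PM^2\cap\PM^0$ for every $t>0$, so the hypotheses of Lemma \ref{lemEmbedding} (with $b=0$, and $q\in[2,3)$, noting that $q>\tfrac{3}{3-b}=1$ is automatic) are met, and $F(t)\in L^q(\RRR)$.

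Next I would plug these two bounds into the interpolation inequality \eqref{PM:Lp} of Lemma \ref{lemEmbedding} with $b=0$, which reads
\begin{equation*}
\|F(t)\|_{L^q}\leqslant C\,\|F(t)\|_{\PM^2}^{1-\frac{q-3}{q(0-2)}}\,\|F(t)\|_{\PM^0}^{\frac{q-3}{q(0-2)}}
= C\,\|F(t)\|_{\PM^2}^{1-\frac{3-q}{2q}}\,\|F(t)\|_{\PM^0}^{\frac{3-q}{2q}}.
\end{equation*}
Substituting the two endpoint estimates, the factor $\|f\|_{\mathcal X^0}$ appears to the total power $\left(1-\tfrac{3-q}{2q}\right)+\tfrac{3-q}{2q}=1$, while the powers of $t$ combine as $0\cdot\left(1-\tfrac{3-q}{2q}\right)+1\cdot\tfrac{3-q}{2q}=\tfrac{3-q}{2q}$, which yields precisely \eqref{F_lq}.

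There is no real obstacle here; the only thing to watch is the bookkeeping of the interpolation exponent $\theta=\tfrac{q-3}{q(b-2)}$ from Lemma \ref{lemEmbedding} at $b=0$, namely $\theta=\tfrac{3-q}{2q}\in(0,\tfrac12]$ for $q\in[2,3)$, so that both endpoint estimates are genuinely being interpolated and the time exponent comes out as stated. Alternatively, and perhaps more transparently, one can simply invoke \eqref{PM:Lp} directly with the already-derived bounds $\|F(t)\|_{\PM^0}\leqslant Ct\|f\|_{\mathcal X^0}$ and $\|F(t)\|_{\PM^2}\leqslant C\|f\|_{\mathcal X^0}$ as a black box, since this is literally the $S(t)u_0=0$ case of the argument used for Theorem \ref{corNSLandau}.
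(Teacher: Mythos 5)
Your proposal is correct and is essentially identical to the paper's own proof: both apply Lemma \ref{lemma_est_F} at the endpoints $b=0$ and $b=2$ and then interpolate via inequality \eqref{PM:Lp} of Lemma \ref{lemEmbedding} with $b=0$, yielding the exponent $\tfrac{3-q}{2q}$. (The only quibble is that the interpolation exponent actually lies in $(0,\tfrac14]$ for $q\in[2,3)$, not $(0,\tfrac12]$, but this is immaterial.)
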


\begin{proof}
We combine estimate \rf{est_F} for $b=2$ and for $b=0$ with 
 the interpolation inequality from Lemma \ref{lemEmbedding} 
in order to obtain the following estimate for each $q\in [2,3)$
\begin{align*}
		\| F(t)\|_{L^q}\leqslant C\|F(t)\|_{\PM^2}^{1-\frac{3-q}{2q}}
		\|F(t)\|_{\PM^{0}}^{\frac{3-q}{2q}}\leqslant C  t^{\frac{3-q}{2q}}
		\|f\|_{\mathcal{X}^0} 
\end{align*}
for all $t>0$.
\end{proof}

\begin{lemma}\label{lemma_f_2delta}
For each $\delta\in  [0,1)$ there exists a constant $C(\delta)>0$ such that for every  $f=f(\cdot,t) $ 
satisfying 
$\sup_{t>0} t^{\frac{\delta}{2}}\|f(t)\|_{\PM^\delta}<\infty$
 we obtain
\begin{equation*}
 \sup_{t>0} t^{\frac{\delta}{2}} \left\| \int_{0}^t S(t-\tau ) \Projector f(\tau) \ud \tau \right\|_{\PM^{2+\delta}}\leqslant C(\delta)
 \sup_{t>0} t^{\frac{\delta}{2}}
 \|f(t)\|_{\PM^\delta} .
\end{equation*}
\end{lemma}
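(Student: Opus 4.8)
The object inside the $\PM^{2+\delta}$-norm is exactly the force term $F(t)$ from \eqref{force}, whose Fourier transform is given by \eqref{F_trans_F}. So the plan is to argue on the Fourier side and mimic the final step of the proof of Lemma~\ref{lemma_2delta_norm}. First I would start from the definition \eqref{PM_space} of the $\PM^{2+\delta}$-norm and from \eqref{F_trans_F}, and use that every entry of $\widehat\Projector(\xi)$ is bounded by $1$ in absolute value, to get
\begin{equation*}
\left\| \int_{0}^t S(t-\tau)\Projector f(\tau)\ud\tau\right\|_{\PM^{2+\delta}}
\leqslant C\,\esssup_{\xi\in\RRR} |\xi|^{2+\delta}\int_0^t e^{-(t-\tau)|\xi|^2}\,|\widehat f(\xi,\tau)|\ud\tau .
\end{equation*}

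Next I would bound $|\widehat f(\xi,\tau)|\leqslant |\xi|^{-\delta}\|f(\tau)\|_{\PM^\delta}\leqslant |\xi|^{-\delta}\tau^{-\delta/2}\, M$, where $M\equiv\sup_{s>0}s^{\delta/2}\|f(s)\|_{\PM^\delta}$ is the right-hand side of the claimed inequality. The factors $|\xi|^{2+\delta}$ and $|\xi|^{-\delta}$ combine to $|\xi|^2$, so after multiplying by $t^{\delta/2}$ the estimate reduces to showing that
\begin{equation*}
t^{\frac{\delta}{2}}\int_{0}^{t}|\xi|^2 e^{-(t-\tau)|\xi|^{2}}\tau^{-\frac{\delta}{2}}\ud\tau
\end{equation*}
is bounded by a constant depending only on $\delta$, uniformly in $\xi\in\RRR$ and $t>0$.

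That last quantity is precisely the one already handled at the end of the proof of Lemma~\ref{lemma_2delta_norm}: splitting the $\tau$-integral at $t/2$, on $(0,t/2)$ one uses $e^{-(t-\tau)|\xi|^2}\leqslant e^{-t|\xi|^2/2}$ and integrates $\tau^{-\delta/2}$ (which is integrable near $0$ since $\delta<1$), obtaining a bound of the form $|\sqrt t\,\xi|^2 e^{-|\sqrt t\,\xi|^2/2}$; on $(t/2,t)$ one uses $\tau^{-\delta/2}\leqslant (t/2)^{-\delta/2}$ and computes $\int_{t/2}^t |\xi|^2 e^{-(t-\tau)|\xi|^2}\ud\tau = 1-e^{-t|\xi|^2/2}\leqslant 1$. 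Both pieces are bounded uniformly in $\xi$ and $t$, which finishes the proof. There is essentially no genuine obstacle here: the only point requiring a little care is that the $\widehat\Projector$ factor is harmless (being bounded) and that the constant in the $\tau$-integral bound is indeed $\delta$-dependent but $\xi$- and $t$-independent, exactly as in Lemma~\ref{lemma_2delta_norm}.
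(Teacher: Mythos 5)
Your proposal is correct and follows essentially the same route as the paper: reduce to the Fourier-side bound $t^{\delta/2}\sup_{\xi}|\xi|^{2}\int_0^t e^{-(t-\tau)|\xi|^2}\tau^{-\delta/2}\ud\tau$ after extracting $\sup_{s>0}s^{\delta/2}\|f(s)\|_{\PM^\delta}$, and then invoke the splitting at $t/2$ already carried out in the proof of Lemma~\ref{lemma_2delta_norm}. No issues.
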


\begin{proof}
By the definition of the $\PM^{\delta+2}$-norm, we have got the inequality
\begin{align*}
t^{\frac{\delta}{2}} \left\| \int_{0}^t  S(t-\tau )\Projector f(\tau) \ud \tau \right\|_{\PM^{2+\delta}} \leqslant &2\sup_{t>0}t^{\frac{\delta}{2}}\| f(\tau)\|_{\PM^\delta} \\
&\times 
\sup_{\xi \in \RRR,t>0}
t^{\frac{\delta}{2}}|\xi |^{2} \int_{0}^t e^{-(t-\tau )|\xi |^2}\tau^{-\frac{\delta}{2}}\ud \tau, 
\end{align*}
where the second factor on the right-hand side is finite by the same argument as the one used in the proof of Lemma \ref{lemma_2delta_norm}.
\end{proof}

\begin{lemma}\label{Lem:Tf}
For every $\vf \in \mathcal{S}(\RRR)$ we define
\begin{equation}\label{t_d}
	T\vf = \int_{-\infty}^{\infty} \log |x_3| \partial_{x_3}\vf (0,0,x_3)\ud x_3.
\end{equation}	
Then $T\in \mathcal{S}'(\RRR)$ and 
\begin{equation}\label{lttd}
	\widehat T(\xi) = 2^{-\frac{3}{2}} \pi^{-\frac{1}{2}}i \sgn \xi_3.
\end{equation}
\end{lemma}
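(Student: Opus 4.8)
The plan is to reduce $T$ to a principal-value distribution by an integration by parts and then to compute $\widehat T$ straight from the definition $\langle\widehat T,\vf\rangle=\langle T,\widehat\vf\rangle$. To see that $T\in\mathcal S'(\RRR)$: for $\vf\in\mathcal S(\RRR)$ the one-variable function $t\mapsto\partial_{x_3}\vf(0,0,t)$ lies in $\mathcal S(\bbfR)$, and $C_\vf:=\sup_{t\in\bbfR}(1+|t|)^{2}\,|\partial_{x_3}\vf(0,0,t)|$ is bounded by a continuous seminorm of $\vf$. Since $\log|t|$ is locally integrable on $\bbfR$ while $|\log|t||\,(1+|t|)^{-2}\in L^1(\bbfR)$, the integral in \eqref{t_d} converges absolutely with $|T\vf|\le C\,C_\vf$, so $T$ is a continuous linear functional on $\mathcal S(\RRR)$.

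\textit{Reduction to a principal value.} Fix $\vf$ and set $g(t)=\vf(0,0,t)$. Cutting out $\{|t|<\varepsilon\}$ and integrating by parts on each of the half-lines $(\varepsilon,\infty)$ and $(-\infty,-\varepsilon)$, the boundary terms at $\pm\varepsilon$ combine to $\log\varepsilon\,\big(g(-\varepsilon)-g(\varepsilon)\big)=O(\varepsilon\log\varepsilon)\to0$ by smoothness of $g$, while the bulk term tends to a principal value as $\varepsilon\to0$. Hence
$$
T\vf=-\,\mathrm{p.v.}\int_{\bbfR}\frac{\vf(0,0,t)}{t}\,\ud t,
$$
i.e.\ $T=-\,\delta_0(x_1)\otimes\delta_0(x_2)\otimes\mathrm{p.v.}\,(1/x_3)$.

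\textit{Fourier transform.} Applying the displayed identity with $\widehat\vf$ in place of $\vf$ and using $\widehat\vf(0,0,t)=(2\pi)^{-3/2}\int_{\RRR}e^{-iy_3 t}\vf(y)\,\ud y$ from \eqref{FTr}, we obtain
$$
\langle\widehat T,\vf\rangle=\langle T,\widehat\vf\rangle=-(2\pi)^{-3/2}\,\mathrm{p.v.}\int_{\bbfR}\frac1t\int_{\RRR}e^{-iy_3 t}\vf(y)\,\ud y\,\ud t.
$$
Exchanging the integrals and using the classical one-dimensional identity $\mathrm{p.v.}\!\int_{\bbfR}e^{-iy_3 t}\,t^{-1}\,\ud t=-i\pi\,\sgn y_3$ gives
$$
\langle\widehat T,\vf\rangle=(2\pi)^{-3/2}i\pi\int_{\RRR}\sgn(y_3)\,\vf(y)\,\ud y,
$$
so $\widehat T$ is represented by the bounded function $\xi\mapsto(2\pi)^{-3/2}i\pi\,\sgn\xi_3=2^{-3/2}\pi^{-1/2}i\,\sgn\xi_3$, which is \eqref{lttd}.

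\textit{Main obstacle.} The first two steps are routine; care is needed in the last one. First, one must pin down the constant and sign in $\mathrm{p.v.}\!\int e^{-iyt}t^{-1}\ud t=-i\pi\,\sgn y$ consistently with the convention \eqref{FTr} (it follows, e.g., from $\int_{\bbfR}\sin(yt)\,t^{-1}\ud t=\pi\,\sgn y$, the oddness of the cosine part killing its contribution). Second, the interchange of integrals must be justified: one works with the truncation $\int_{\varepsilon<|t|<1/\varepsilon}$, applies ordinary Fubini there, and passes to the limit using the uniform bound $\big|\int_{\varepsilon<|t|<1/\varepsilon}e^{-iyt}t^{-1}\ud t\big|\le C$ together with dominated convergence against $\vf\in\mathcal S(\RRR)$.
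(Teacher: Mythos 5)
Your proof is correct and follows essentially the same route as the paper: integrate by parts to identify $T$ with $-\delta_0(x_1)\otimes\delta_0(x_2)\otimes\mathrm{p.v.}(1/x_3)$, then use the classical formula $\mathrm{p.v.}\!\int e^{-iyt}t^{-1}\ud t=-i\pi\sgn y$ under the normalization \eqref{FTr}. The only (harmless) difference is that you establish $T\in\mathcal S'(\RRR)$ directly from the absolute convergence of \eqref{t_d} using the local integrability of $\log|t|$, whereas the paper bounds $T\vf$ after the integration by parts; your justification of the Fubini/limit interchange is in fact slightly more careful than the paper's.
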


\begin{proof}
We repeat the well-known calculations from the study of the principal value distribution (see {\it e.g.}~\cite[Ch.3]{L58}). First, we show that $T$ defined by formula \rf{t_d} is a tempered distribution. Indeed, for $\vf \in \mathcal{S}(\RRR)$ integrating by parts, we have 
\begin{align*}
|T\vf |&= \left|\lim_{\ve \to 0}\int_{|x_3|>\ve } \log |x_3| \partial_{x_3}\vf (0,0,x_3)\ud x_3 \right| \\
& =\left|\lim_{\ve \to 0}\int_{|x_3|>\ve } \frac{1}{x_3} \vf (0,0,x_3)\ud x_3
+\lim_{\ve \to 0}\big( \log |\ve| \left(\vf (0,0,-\ve)-\vf(0,0,\ve)\right)\big) \right|\\
&=\left|\lim_{\ve \to 0}\int_{\ve <|x_3|<1 } \frac{1}{x_3}\vf (0,0,x_3)\ud x_3 +\int_{|x_3|\geqslant 1 } \frac{1}{x_3}\vf (0,0,x_3)\ud x_3 \right|\\
&\leqslant \| \vf'\|_{\infty} +\int_{|x_3|\geqslant 1 } \frac{1}{|x_3|}|\vf (0,0,x_3)|\ud x_3 <\infty,
\end{align*}	
because the boundary term, which result from integration by parts, tends to zero as $\ve$ goes to zero. Since $i\pi \sgn \xi_3$ is a Fourier transform of the one-dimensional principal value distribution $-\frac{1}{x_3}$ (see {\it e.g.}~\cite[Ch.3]{L58}) applying the three-dimensional definition of the Fourier transform, we obtain
\begin{align*}
	T\widehat \vf &= -\lim_{\ve \to 0}\int_{|x_3|>\ve } \frac{1}{x_3} \widehat \vf (0,0,x_3)\ud x_3 =- (2\pi)^{-\frac{3}{2}}\lim_{\ve \to 0} \int_{\RRR} \int_{|x_3|>\ve } \frac{1}{x_3} e^{-i x\cdot \xi} \vf(\xi_1,\xi_2,\xi_3) \ud x_3 \ud \xi\\
	&=(2\pi)^{-\frac{3}{2}}\int_{\RRR}i\pi \sgn \xi_3 \vf (\xi_1,\xi_2,\xi_3)\ud \xi.
\end{align*}
This proves formula \rf{lttd}.
\end{proof}	
\section{Proofs of results from Section \ref{results}}\label{proofs}

Solutions to integral equations \eqref{int_eq} and  \eqref{int_E} are obtained from the Banach fixed point theorem which, in the case of ``quadratic'' equations, is often reformulated as in the following lemma. We skip its elementary proof.

\begin{lemma}\label{lem:xyB}
Let $(X,\|\cdot\|)$ be a Banach space, $L:\,X\to X$ be a linear bounded operator such that for a constant $\lambda\in[0,1)$, we have
$\|L(x)\|\leqslant \lambda\|x\|$ for all $x\in X,$
and $B :X \times X \to X$ be a bilinear mapping such that
$$\|B(x_1,x_2)\|\leqslant \eta\|x_1\|\,\|x_2\|\quad\text{for every}\quad x_1,\, x_2\in X $$
for some constant $\eta>0$. Then, for every $y\in X$ satisfying $4\eta\|y\|< (1-\lambda)^{2}$, the equation
\begin{equation}\label{eq.fixed-theorem}
x = y +L(x)+B(x,x)
\end{equation}
 has a solution $x\in X$. In particular, this solution satisfies $\|x\|\leqslant \frac{2\|y\|}{1-\lambda}$,
and it is the only one among all solutions satisfying $\|x\|< \frac{1-\lambda}{2\eta}$.
\end{lemma}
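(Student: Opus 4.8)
The plan is to apply the Banach contraction mapping principle to the map $\Phi(x)\equiv y+L(x)+B(x,x)$ on a suitable closed ball of $X$. First I would look for a radius $R>0$ such that $\Phi$ maps the closed ball $\overline B(0,R)=\{x\in X:\|x\|\leqslant R\}$ into itself. Using the bounds on $L$ and $B$, for $\|x\|\leqslant R$ we have
$$
\|\Phi(x)\|\leqslant \|y\|+\lambda R+\eta R^{2},
$$
so it suffices to pick $R$ with $\eta R^{2}-(1-\lambda)R+\|y\|\leqslant 0$. This quadratic in $R$ has real roots precisely when $(1-\lambda)^{2}-4\eta\|y\|\geqslant 0$, which holds with a strict sign by the hypothesis $4\eta\|y\|<(1-\lambda)^{2}$; its smaller root is
$$
R_{-}=\frac{(1-\lambda)-\sqrt{(1-\lambda)^{2}-4\eta\|y\|}}{2\eta}
=\frac{2\|y\|}{(1-\lambda)+\sqrt{(1-\lambda)^{2}-4\eta\|y\|}}\leqslant \frac{2\|y\|}{1-\lambda},
$$
where the middle expression, obtained by rationalizing the numerator, already delivers the claimed a priori bound on the norm of the solution.

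Next I would check that $\Phi$ is a contraction on $\overline B(0,R_{-})$. By bilinearity, $B(x_{1},x_{1})-B(x_{2},x_{2})=B(x_{1}-x_{2},x_{1})+B(x_{2},x_{1}-x_{2})$, so for $x_{1},x_{2}\in\overline B(0,R_{-})$ one gets
$$
\|\Phi(x_{1})-\Phi(x_{2})\|\leqslant \big(\lambda+2\eta R_{-}\big)\|x_{1}-x_{2}\|.
$$
Since the \emph{strict} inequality $4\eta\|y\|<(1-\lambda)^{2}$ forces $R_{-}<(1-\lambda)/(2\eta)$, the Lipschitz constant $\lambda+2\eta R_{-}$ is strictly less than $1$. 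Hence the Banach fixed point theorem yields $x\in\overline B(0,R_{-})$ with $x=\Phi(x)$, i.e.\ a solution of \eqref{eq.fixed-theorem} satisfying $\|x\|\leqslant R_{-}\leqslant 2\|y\|/(1-\lambda)$.

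For the uniqueness statement among all solutions with $\|x\|<(1-\lambda)/(2\eta)$, I would compare two such solutions $x_{1},x_{2}$ directly: subtracting the equations and using the same bilinear identity gives
$$
\|x_{1}-x_{2}\|\leqslant \big(\lambda+\eta(\|x_{1}\|+\|x_{2}\|)\big)\|x_{1}-x_{2}\|,
$$
and $\|x_{1}\|+\|x_{2}\|<(1-\lambda)/\eta$ makes the prefactor strictly below $1$, forcing $x_{1}=x_{2}$. There is no genuine obstacle in this argument; the only point deserving care is the bookkeeping with the quadratic inequality for $R$, together with the observation that the \emph{strict} smallness hypothesis on $\|y\|$ is precisely what makes the self-mapping and contraction requirements simultaneously satisfiable. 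The degenerate case $y=0$ is immediate, since then $R_{-}=0$ and $x=0$ solves \eqref{eq.fixed-theorem}.
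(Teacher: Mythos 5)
Your argument is correct and is precisely the standard quadratic fixed-point lemma proof (Banach contraction on the ball of radius equal to the smaller root $R_{-}$ of $\eta R^{2}-(1-\lambda)R+\|y\|=0$, with the strict hypothesis $4\eta\|y\|<(1-\lambda)^{2}$ guaranteeing both the self-map property and the contraction constant $\lambda+2\eta R_{-}<1$, plus the separate two-solution comparison for uniqueness in the larger ball). The paper explicitly skips this ``elementary proof,'' and what you wrote is exactly the intended argument, with all the bookkeeping done correctly.
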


 Results from Section \ref{results} are obtained immediately from estimates in Section \ref{estimates}.

\begin{proof}[Proof of Proposition \ref{thmNSExistence}] 
It follows from Lemma \ref{lemm_heat_PM} that 
$\|S(\cdot)u_0\|_{\X^2}\leqslant \|u_0\|_{\PM^2}$ for each $ 
u_0\in \PM^2$. Moreover, by Lemma \ref{lemma_est_F} with $b=2$, we obtain 
$\|F\|_{\X^2}\leqslant 2\|f\|_{\X^0}.$
Thus, 
in order to construct a solution to equation \eqref{int_eq},
it suffices to apply Lemma \ref{lem:xyB} together with estimate \rf{est_B_form}.
\end{proof}

\begin{proof}[Proof of Theorem \ref{corNSLandau}] 
First we show that
every solution $u\in \mathcal{X}^2$ of problem \rf{eqNSNS} corresponding to an initial datum 
$u_0\in \PM^{2}$ and an external force $f \in \mathcal{X}^0$ satisfies 
$$
u(t)-S(t)u_0-F(t) \in \PM^b\qquad 
 \text{for each}\quad  t>0\quad  \text{and}\quad  b\in \left[0,2\right].
 $$ 
Indeed, 
applying Lemma~ \ref{lemma_est_B_PMb} 
to  integral equation \rf{int_eq}
we have got
\begin{align}\label{uSbBF}
\|u(t) - S(t)u_0-F(t)\|_{\PM^b} &\leqslant \| B(u,u)(t)\|_{\PM^b}  \leqslant 
Ct^{\frac{2-b}{2}}\|u\|_{\mathcal{X}^2}^2
\end{align}
 for each $b\in \left[0,2\right]$ and for all $t>0$.
Next, 
 we combine the interpolation inequality from Lemma \ref{lemEmbedding} with the exponents $b=0$ and 
$q\in [2,3)$ together with  estimate \eqref{uSbBF} for $b=2$ and for $b=0$
 in order to obtain 
 \begin{align*}
 \|u(t)-S(t)u_0-F(t)\|_q &
 \leqslant C\|u(t)-S(t)u_0-F(t)\|_{\PM^2}^{1-\frac{3-q}{2q}}
 \|u(t)-S(t)u_0-F(t)\|_{\PM^{0}}^{\frac{3-q}{2q}}\\
 &\leqslant C  t^{\frac{3-q}{2q}}
 \|u\|^2_{\mathcal{X}^2} 
 \end{align*}
 for all $t>0$.
\end{proof}

\begin{proof}[Proof of Proposition \ref{thmENSExistence}]
Here, it suffices to proceed analogously as in the proof of 
Proposition~\ref{thmNSExistence} using the estimate of the bilinear form $B_E$ from \rf{est_BE_form}.
\end{proof}

\begin{proof}[Proof of Theorem \ref{lemENSRegular}]
We recall  that, by Proposition \ref{thmENSExistence}, both solutions to integral equation \rf{int_E}: the vector fields  $w_i=w_i(x)$ corresponding to the external forces $g_i\in\PM^0$ with $i\in \{1,2\}$ satisfy the inequalities
\begin{equation}\label{wUe}
\| w_i \|_{\PM^{2}}<C\ve \quad \text{with}\quad i\in \{1,2\} .
\end{equation}
First, we are going to use these inequalities  in order to 
 show that 
$w_1-w_2 \in \PM^{b}$  for every $b\in \left(1,3\right)$ together with the estimate 
\begin{equation}\label{w-U}
\| w_1-w_2 \|_{\PM^{b} }\leqslant C
\esssup_{\xi\in\RRR} |\xi|^{b-2}\big|\widehat g_1(\xi)-\widehat g_2(\xi)\big|=C \| g_1-g_2\|_{\PM^{b-2}}
.
\end{equation}
Indeed, 
using integral equations \rf{int_E} 
for both stationary solutions 
$w_1$ and $w_2$ we obtain that the difference  $W= w_1-w_2$ satisfies the equation 
\begin{equation*}
  W+B_{E}(w_1,W)(t)+B_{E}(W,w_2)(t)=\int_{-\infty}^t S(t-\tau )\Projector \big(g_1-g_2\big) \ud  \tau.
\end{equation*}
Applying Lemma   \ref{lemma_est_BE_form} 
and expression \eqref{F_trans_G}
we get the estimate
\begin{equation*}
  \|W\|_{\PM^{b}}\leqslant C\left(\| w_1 \|_{\PM^{2}}+\|w_2\|_{\PM^{2}}\right)\|W\|_{\PM^{b}}+2\esssup_{\xi\in\RRR} |\xi|^{b-2}\big|\widehat g_1(\xi)-\widehat g_2(\xi)
\big|
\end{equation*}
 which together with  inequalities \eqref{wUe} with sufficiently small $\ve>0$,
 completes the proof of inequality \eqref{w-U}. 

Finally, 
in order to estimate the $L^q$-norms of $w_1-w_2$,
it suffices to combine the estimates of the $\PM$-norms from   \eqref{w-U}   with the interpolation 
inequalities
from Lemma \ref{lemEmbedding} if $b\in (1,2)$ and from Lemma \ref{lemEmbedding2} with  $\delta = b-2$ if $b\in (2,3)$. 
\end{proof}

\begin{proof}[Proof of Theorem \ref{thm_self_similar}] 
This theorem  is, in fact, a minor extension of \cite[Thm.~7.2]{CK04}.
Using integral formula \rf{int_eq}  we obtain that the difference $V(t)=u_1(t)-u_2(t)$ satisfies the following equation
\begin{equation*}
\begin{split}
V(t) = &S(t)\big(u_{01}-u_{02}\big)-B(V,u_1)(t) - B(u_2 , V)(t) + \int_{0}^t  S(t-\tau) \Projector \big(f_1(\tau)-f_2(\tau)\big) \ud \tau. 
\end{split}
\end{equation*}
We compute the $\PM^{2+\delta}$-norm with $\delta\in [0,1)$  of this equation applying the estimates from Lemmas \ref{lemm_heat_PM},  
\ref{lemma_2delta_norm} and \ref{lemma_f_2delta} in order to obtain   
\begin{align*}
 t^{\frac{\delta}{2}}\| V(t)\|_{\PM^{2+\delta}}\leqslant & C\| u_{01}-u_{02}\|_{\PM^2}+C\big( \|u_1\|_{\mathcal{X}^2}+\| u_2\|_{\X^2}\big)\sup_{t>0}t^{\frac{\delta}{2}}\| V(t)\|_{\PM^{2+\delta}}\\
&+C\sup_{t>0} t^{\frac{\delta}{2}}\|f_1(t)-f_2(t)\|_{\PM^\delta} .  
\end{align*}
Multiplying the both sides of this estimate  by $t^{-\frac{\delta}{2}}$ and using 
the inequalities 
\begin{equation}\label{u234}
\|u_i\|_{\mathcal{X}^2}\leqslant C\left(\|u_{0i}\|_{\PM^2}+\|f_i\|_{\X^0}\right)\leqslant 2C\ve
\end{equation} 
from Proposition  \ref{thmNSExistence}
 with sufficiently small $\ve >0$, we complete the proof of the estimate 
\begin{equation}\label{u1u2:proof}
 \| u_1(\cdot , t) -u_2(\cdot,t) \|_{\PM^{2+\delta}}\leqslant C t^{-\frac{\delta}{2}} \qquad \text{for all}\quad t>0
\end{equation}
with a number $C>0$ depending on $\| u_{0i}\|_{\PM^2}$ and $\| f_i\|_{\X^0}$ but independent of $t$.
Now, in order to complete the proof, it suffices to combine this inequality (which holds true for every $\delta \in [0,1)$) with the  interpolation inequality \eqref{PM:Lp:2}.

In order to show the faster convergence in \eqref{u1u2:dec} under assumptions
\eqref{f1f2delta:1}, it suffices to  use the  interpolation inequality \eqref{PM:Lp:2}
together with estimate \eqref{u1u2:proof}
and with the relation $\lim_{t\to\infty} 
 \| u_1(\cdot , t) -u_2(\cdot,t) \|_{\PM^{2}}=0$ proved in \cite[Thm.~5.1]{CK04}.
\end{proof}




\end{document}